\theoremstyle{plain}
\newtheorem{thm}{Theorem}[section]
\crefname{thm}{Theorem}{Theorems}
\Crefname{thm}{Theorem}{Theorems}
\newtheorem{pro}[thm]{Proposition}
\crefname{pro}{Proposition}{Propositions}
\Crefname{pro}{Proposition}{Propositions}
\newtheorem{lem}[thm]{Lemma}
\crefname{lem}{Lemma}{Lemmas}
\Crefname{lem}{Lemma}{Lemmas}
\newtheorem{cor}[thm]{Corollary}
\crefname{cor}{Corollary}{Corollaries}
\Crefname{cor}{Corollary}{Corollaries}
\crefname{conj}{Conjecture}{Conjectures}
\Crefname{conj}{Conjecture}{Conjectures}
\crefname{cons}{Construction}{Constructions}
\Crefname{cons}{Construction}{Constructions}
\crefname{claim}{Claim}{Claims}
\Crefname{claim}{Claim}{Claims}
\crefname{property}{Property}{Properties}
\Crefname{property}{Property}{Properties}
\crefname{problem}{Problem}{Problems}
\Crefname{problem}{Problem}{Problems}
\theoremstyle{definition}
\crefname{defi}{Definition}{Definitions}
\Crefname{defi}{Definition}{Definitions}
\crefname{nota}{Notation}{Notations}
\Crefname{nota}{Notation}{Notations}
\crefname{convention}{Convention}{Conventions}
\Crefname{convention}{Convention}{Conventions}
\crefname{cond}{Condition}{Conditions}
\Crefname{cond}{Condition}{Conditions}
\newtheorem{assum}[thm]{Assumption}
\crefname{assum}{Assumption}{Assumptions}
\Crefname{assum}{Assumption}{Assumptions}
\theoremstyle{remark}
\newtheorem{rmk}[thm]{Remark}
\crefname{rmk}{Remark}{Remarks}
\Crefname{rmk}{Remark}{Remarks}
\crefname{ex}{Example}{Examples}
\Crefname{ex}{Example}{Examples}
\newtheorem{ques}[thm]{Question}
\crefname{ques}{Question}{Questions}
\Crefname{ques}{Question}{Questions}
\crefname{section}{Section}{Sections}
\Crefname{section}{Section}{Sections}
\crefname{subsection}{Subsection}{Subsections}
\Crefname{subsection}{Subsection}{Subsections}
\crefname{figure}{Figure}{Figures}
\Crefname{figure}{Figure}{Figures}
\newtheorem*{acknowledgement}{Acknowledgement}
\newcommand{\BDiff}{B\textnormal{Diff}}
\newcommand{\TDiff}{\textnormal{TDiff}}
\newcommand{\Homeo}{\textnormal{Homeo}}
\newcommand{\BHomeo}{B\textnormal{Homeo}}
\newcommand{\SWcal}{\mathcal{SW}}
\newcommand{\SWbbhalftot}{\mathbb{SW}_{\mathrm{half\mathchar`- tot}}}
\newcommand{\SWcalhalftot}{\mathcal{SW}_{\mathrm{half\mathchar`- tot}}}
\newcommand{\id}{\textnormal{id}}
\newcommand{\Z}{\mathbb{Z}}
\newcommand{\N}{\mathbb{N}}
\newcommand{\R}{\mathbb{R}}
\newcommand{\CP}{\mathbb{CP}}
\newcommand{\fraks}{\mathfrak{s}}
\newcommand{\frakt}{\mathfrak{t}}
\newcommand{\scrR}{\mathscr{R}}
\newcommand{\calS}{\mathcal{S}}
\newcommand{\calM}{\mathcal{M}}
\newcommand{\Conj}{\mathrm{Conj}}
\DeclareMathOperator{\Aut}{Aut}
\DeclareMathOperator{\Map}{Map}
\newcommand{\Spinc}{\mathrm{Spin}^{c}}
\newcommand{\circPi}{\mathring{\Pi}}
\newcommand{\del}{\partial}
\newcommand{\bbS}{\mathbb{S}}
\newcommand{\divi}{\mathrm{div}}
\newcommand{\Bhalf}{\mathcal{B}_{\mathrm{half}}}
\title{The homology of moduli spaces of 4-manifolds may be infinitely generated}
\author{Hokuto Konno}
\date{}
\begin{document}

\maketitle

\begin{abstract}
For a simply-connected closed manifold $X$ of $\dim X \neq 4$, the mapping class group $\pi_0(\mathrm{Diff}(X))$ is known to be finitely generated. We prove that analogous finite generation fails in dimension 4. Namely, we show that there exist simply-connected closed smooth $4$-manifolds whose mapping class groups are not finitely generated. More generally, for each $k>0$, we prove that there are simply-connected closed smooth $4$-manifolds $X$ for which $H_k(B\mathrm{Diff}(X);\mathbb{Z})$ are not finitely generated. The infinitely generated subgroup of $H_k(B\mathrm{Diff}(X);\mathbb{Z})$ which we detect are topologically trivial, and unstable under the connected sum of $S^2 \times S^2$. These results are proven by constructing and computing an infinite family of characteristic classes using Seiberg--Witten theory.
\end{abstract}

\section{Introduction}
\label{section Main results}

\subsection{Main results}
\label{subsec main results}
The purpose of this paper is to present a new special phenomenon in dimension $4$ in terms diffeomorphism groups.
To describe our result, let $\mathrm{Diff}(X)$ denote the diffeomorphism group equipped with the $C^\infty$-topology for a given a smooth manifold $X$.
It is known that the mapping class group $\pi_0(\mathrm{Diff}(X))$ is finitely generated, if $X$ is simply-connected, closed, and $\dim X \neq 4$.
For $\dim X \geq 5$, this is due to Sullivan~\cite[Theorem~(13.3)]{Sull77}.
For $\dim X \leq 3$, finite generation holds even dropping the simple-connectivity:
In fact, even stronger finiteness is known in all dimensions $\neq 4$ (see \cref{subsection Finiteness dim not 4}, including a remark for $\dim=5$).

We prove that analogous finite generation fails in dimension $4$.
Namely, we show that there exist simply-connected closed smooth $4$-manifolds whose mapping class groups are infinitely generated:

\begin{thm}
\label{thm: main pi0}
For $n \geq 2$, set $X=E(n)\#S^2 \times S^2$.
Then $\pi_0(\mathrm{Diff}(X))$ is not finitely generated.   
\end{thm}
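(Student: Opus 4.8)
The plan is to construct, for a suitable infinite family of knots, diffeomorphisms of $X=E(n)\#S^2\times S^2$ that are topologically isotopic to the identity, and then to show that these already generate an infinitely generated subgroup, using Ruberman's one-parameter families Seiberg--Witten invariants. The $S^2\times S^2$ summand enters for a purely dimensional reason. For $E(n)$ one has $2\chi+3\sigma=0$, so the spin$^{c}$ structures whose first Chern class is a multiple of the fiber class $[T]$ --- exactly the ones carrying the Seiberg--Witten information of $E(n)$ --- have $0$-dimensional moduli spaces; connect-summing with $S^2\times S^2$ raises $2\chi+3\sigma$ to $4$, so on $X$ these spin$^{c}$ structures $\fraks_{c}$ (with $c=m[T]$, $c^{2}=0$) acquire formal dimension $-1$. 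This is precisely the regime in which, for a diffeomorphism $f$ with $f^{*}c=c$, the parametrized moduli space over the mapping torus $X\rtimes_{f}S^{1}$ (for a generic $S^{1}$-family of metrics and perturbations) is $0$-dimensional, giving a count $\mathrm{FSW}(f,\fraks_{c})\in\Z/2$; since $b^{+}(X)=2n\geq 4$ no wall-crossing occurs, so this is a diffeomorphism invariant, and it is additive in $f$ on the subgroup $\Gamma$ of homologically trivial mapping classes.

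First I would produce the diffeomorphisms. Fintushel--Stern knot surgery on $E(n)$ along a regular fiber $T$ (whose complement is simply connected) gives $E(n)_{K}$, which is homeomorphic to $E(n)$ and has $\mathrm{SW}_{E(n)_{K}}=\mathrm{SW}_{E(n)}\cdot\Delta_{K}(e^{2[T]})$. Carrying out this surgery in a one-parameter family --- equivalently, invoking the ``stable isotopy'' construction of Auckly--Kim--Melvin--Ruberman, which uses exactly one $S^2\times S^2$ summand --- should produce $f_{K}\in\Diff(X)$ acting trivially on $H_{*}(X;\Z)$, hence (Quinn--Perron, as $X$ is simply connected) topologically isotopic to $\id_{X}$, and becoming smoothly isotopic to $\id$ after one more stabilization by $S^2\times S^2$ --- so the phenomenon is genuinely unstable. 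The crux is a gluing/excision formula for the families invariant --- the family analogue of Fintushel--Stern --- of the form
\[
\mathrm{FSW}(f_{K},\fraks_{c})\;\equiv\;\bigl[\,\text{coefficient of }e^{c}\text{ in }\mathrm{SW}_{E(n)}\cdot(\Delta_{K}(e^{2[T]})-1)\,\bigr]\pmod 2 .
\]

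Next I would choose the torus knots $K_{i}=T(2,2i+1)$, of genus $i$, so that $\Delta_{K_{i}}$ has degree $i$ with leading coefficient $\pm 1$. Since $\mathrm{SW}_{E(n)}=(e^{[T]}-e^{-[T]})^{n-2}$, the Laurent polynomial $\mathrm{SW}_{E(n)}\cdot(\Delta_{K_{i}}(e^{2[T]})-1)$ has nonzero coefficient at its top term $e^{(2i+n-2)[T]}$, and this top degree strictly increases in $i$. So with $c_{i}:=(2i+n-2)[T]$ we get $\mathrm{FSW}(f_{K_{i}},\fraks_{c_{i}})\neq 0$ while $\mathrm{FSW}(f_{K_{j}},\fraks_{c_{i}})=0$ for $j<i$; and, since $[T]$ is primitive, the $c_{i}$ have strictly increasing divisibility, hence lie in pairwise distinct orbits of the $\pi_{0}(\Diff(X))$-action on $H^{2}(X;\Z)$.

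To finish: all $[f_{K_{i}}]$ lie in $\Gamma=\ker(\pi_{0}(\Diff(X))\to\Aut(H_{2}(X;\Z)))$, and $\phi:=(\mathrm{FSW}(\cdot,\fraks_{c}))_{c}$ is a homomorphism $\Gamma\to\bigoplus_{c}\Z/2$, equivariant for the permutation action of $G/\Gamma$ on characteristic classes, where $G:=\pi_{0}(\Diff(X))$. If $G$ were finitely generated, then --- $G/\Gamma$ being finitely presented (it has finite index in the arithmetic group $\Aut(H_{2}(X;\Z))$, by the realization results of Freedman and Wall) --- $\Gamma$ would be finitely normally generated in $G$; consequently $\phi(\Gamma)$ would be a finitely generated $\Z/2[G/\Gamma]$-module, hence supported on only finitely many $G/\Gamma$-orbits of characteristic classes. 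That contradicts $\phi(f_{K_{i}})\neq 0$ on the orbit of $c_{i}$ for every $i$, since the $c_{i}$ exhaust infinitely many orbits. Hence $\pi_{0}(\Diff(X))$ is not finitely generated. I expect the main obstacle to be the middle stage --- rigorously constructing the family-knot-surgery diffeomorphisms $f_{K}$ on $X$ and proving the families gluing formula that evaluates $\mathrm{FSW}(f_{K},\fraks_{c})$ in terms of $\Delta_{K}$; the dimension bookkeeping and the group-theoretic endgame are then routine.
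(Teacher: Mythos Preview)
Your plan is sound and would work, but it takes a different route from the paper. You build diffeomorphisms $f_K$ from knot surgery, detect them via Ruberman-style one-parameter Seiberg--Witten invariants $\mathrm{FSW}(\,\cdot\,,\fraks_c)$ defined on the Torelli group $\Gamma$, and then close with a group-theoretic argument (finite presentability of $G/\Gamma$ forces $\Gamma$ to be finitely normally generated, hence $\phi(\Gamma)$ is supported on finitely many $G/\Gamma$-orbits of spin$^c$ structures). The paper instead uses log transforms $E(n;i)$ in place of knot surgery (a cosmetic difference), but more substantively it \emph{pre-packages your group-theoretic step into the invariant itself}: rather than a homomorphism $\Gamma\to\bigoplus_c\Z/2$ indexed by individual spin$^c$ structures, it constructs cohomology classes $\SWbbhalftot^1(X,\calS)\in H^1(B\Diff^+(X);\Z/2)$ indexed by $\Diff^+(X)$-orbits $\calS$, obtained by summing the parametrized count over a half-orbit (one charge-conjugation representative per orbit element). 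Because these live on $B\Diff^+(X)$, their pairing with $H_1(B\Diff^+(X);\Z)=\pi_0(\Diff(X))_{\mathrm{ab}}$ immediately gives a surjection onto $(\Z/2)^\infty$, so one gets the stronger conclusion that the abelianization is infinitely generated, with no separate normal-generation argument. Your approach has the virtue of staying close to Ruberman's original framework and isolating exactly the algebraic input needed to pass from the Torelli group to the full mapping class group; the paper's approach buys a cleaner statement and generalizes uniformly to $H_k(B\Diff(X))$ for all $k$. Finally, the gluing step you flag as the obstacle---your displayed formula for $\mathrm{FSW}(f_K,\fraks_c)$---is exactly the role played in the paper by the Baraglia--Konno gluing theorem (applied with $N=S^2\times S^2$ and the reflection diffeomorphism), so that ingredient is available off the shelf.
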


Here $E(n)$ denotes the simply-connected elliptic surface of degree $n$ without multiple fiber.
As is well-known, $E(n)\#S^2 \times S^2$ can be written in terms of further basic $4$-manifolds
(e.g. \cite[Corollary~8]{Gom91}):
\[
E(n)\#S^2 \times S^2
\cong 
\left\{
\begin{array}{ll}
2n\CP^2\#10n\overline{\CP}^2 & \text{ for $n$ odd},\\
m(K3\#S^2 \times S^2) & \text{ for $n =2m$ even}.
\end{array}
\right.
\]

\begin{rmk}
After completing a preprint version of this paper, the author was informed that David Baraglia \cite{Baraglia23mapping} also proved that the mapping class groups of simply-connected $4$-manifolds can be infinitely generated.
Baraglia's proof is based on essentially the same method as ours, however we obtained our proofs completely independently.
\end{rmk}

\begin{rmk}[Topological mapping class group]
\label{rem: top MCG}
Let $\Homeo(X)$ denote the homeomorphism group of $X$.
If $X$ is a simply-connected closed topological $4$-manifold, then $\pi_0(\Homeo(X))$ is finitely generated.
This follows from a result by
Quinn~\cite{Q86} and Perron~\cite{P86}.
Thus infinite generation exhibited in \cref{thm: main pi0} is special to the $4$-dimensional smooth category.
\end{rmk}

\cref{thm: main pi0} is a consequence of a more general result on the (co)homology of the moduli spaces $B\mathrm{Diff}(X)$ of $4$-manifolds $X$.
The (co)homology of $B\mathrm{Diff}(X)$ is a fundamental object, since it corresponds to the set of characteristic classes of fiber bundles with fiber $X$.
We shall prove that, for each $k\geq0$, there exist simply-connected closed smooth $4$-manifolds $X$ where $H_{k}(B\mathrm{Diff}(X);\Z)$ are infinitely generated.
More strongly, we shall see that the ``topologically trivial parts" of $H_{k}(B\mathrm{Diff}(X);\Z)$ can be infinitely generated.
To state this, let $i : \mathrm{Diff}(X) \hookrightarrow \Homeo(X)$ denote the inclusion map into the homeomorphism group.
We shall prove:

\begin{thm}
\label{thm: main general}
For $n \geq 2$ and $k \geq 1$, 
set $X = E(n)\# k S^2 \times S^2$.
Then
\[
\ker(i_\ast : H_k(B\mathrm{Diff}(X);\Z) \to H_k(B\Homeo(X);\Z))
\]
contains a direct summand isomorphic to $(\Z/2)^\infty$.
In particular, $H_{k}(B\mathrm{Diff}(X);\Z)$ is not finitely generated.
\end{thm}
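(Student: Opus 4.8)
The plan is to detect the required copy of $(\Z/2)^\infty$ using the families Seiberg--Witten characteristic classes introduced earlier in the paper. For a $\spinc$ structure $\fraks$ on $X = E(n)\# k S^2\times S^2$ with formal dimension $d(\fraks) = -k$ these produce a class in $H^k(\BDiff(X);\Z/2)$, equivalently a homomorphism $\mathrm{sw}_\fraks \colon H_k(\BDiff(X);\Z) \to \Z/2$. Since $2\chi(X)+3\sigma(X) = 4k$, the relevant $\spinc$ structures are exactly those with $c_1(\fraks)^2 = 0$; decomposing $H^2(X;\Z)$ as $H^2(E(n);\Z)$ together with $k$ hyperbolic planes $\langle\sigma_1^{(j)},\sigma_2^{(j)}\rangle$, the characteristic elements $c_1 = K + 2\sum_{j=1}^k a_j\sigma_1^{(j)}$, with $K$ a Seiberg--Witten basic class of $E(n)$ and $a_1,\dots,a_k \in \Z$ arbitrary, give an infinite supply of such $\spinc$ structures. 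This infinitude of invariants is what will separate infinitely many homology classes.

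First I would construct, for each $\underline m = (m_1,\dots,m_k) \in \Z_{>0}^k$, pairwise commuting diffeomorphisms $f_1^{\underline m},\dots,f_k^{\underline m}$ of $X$ with the following properties: each $f_j^{\underline m}$ acts trivially on $H^\ast(X;\Z)$; each is essentially localized near the $j$-th $S^2\times S^2$ summand (so that they honestly commute and are topologically isotopic to the identity through isotopies with pairwise disjoint supports); and $f_1^{\underline m}$ is, up to isotopy, an involution. These are produced in the spirit of Ruberman's constructions: a suitable lattice isometry mixing a fiber class with a hyperbolic plane is realized by a diffeomorphism, and the discrepancy between two such realizations is cohomologically trivial. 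The commuting family assembles into a homomorphism $\Z^k \to \Diff(X)$, hence a map $T^k \to \BDiff(X)$; let $\alpha_{\underline m} \in H_k(\BDiff(X);\Z)$ denote the image of $[T^k]$. Because $f_1^{\underline m}$ is an involution, this classifying map factors through $B(\Z/2 \times \Z^{k-1}) = \mathbb{RP}^\infty \times T^{k-1}$, which forces $2\alpha_{\underline m} = 0$; and because each $f_j^{\underline m}$ is topologically isotopic to the identity --- by Quinn--Perron, as in \cref{rem: top MCG}, this follows from acting trivially on cohomology --- the disjointly supported topological isotopies trivialize the underlying $\Homeo(X)$-bundle over $T^k$, so $i_\ast\alpha_{\underline m} = 0$.

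It then remains to see that, as $m_1$ varies with $m_2,\dots,m_k$ fixed, the classes $\alpha_{m_1} := \alpha_{(m_1,m_2,\dots,m_k)}$ are $\Z/2$-linearly independent in $H_k(\BDiff(X);\Z/2)$, hence a fortiori in $H_k(\BDiff(X);\Z)$. For this I would establish a gluing/product formula: the $X$-bundle over $T^k$ is obtained from the product bundle $E(n)\times T^k$ by fiberwise connected sum, over the $j$-th circle factor for each $j$, with a one-parameter family of $S^2\times S^2$'s (the mapping torus of $f_j^{\underline m}$ acting on that summand), so $\mathrm{sw}_\fraks(\alpha_{\underline m})$ should factor as the ordinary Seiberg--Witten invariant of $(E(n),K)$ --- nonzero precisely when $K$ is a basic class --- times $k$ one-parameter wall-crossing terms, the $j$-th being $\pm 1$ when $a_j = m_j$ and $0$ otherwise. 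Taking $\fraks = \fraks_{m_1}$ to be the $\spinc$ structure with $c_1 = K + 2m_1\sigma_1^{(1)} + 2\sum_{j\geq 2}m_j\sigma_1^{(j)}$ for a fixed basic class $K$, this gives $\mathrm{sw}_{\fraks_{m_1}}(\alpha_{m_1'}) = \delta_{m_1 m_1'}$, the desired (in fact diagonal) separation. Combined with $2\alpha_{m_1} = 0$ and $i_\ast\alpha_{m_1} = 0$, the subgroup generated by $\{\alpha_{m_1}\}_{m_1 \in \Z_{>0}}$ is a copy of $(\Z/2)^\infty$ inside $\ker(i_\ast)$, and in particular $H_k(\BDiff(X);\Z)$ is not finitely generated.

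The main obstacle I anticipate is the simultaneous execution of the last two ingredients: proving the families gluing/product formula in exactly the form needed --- with genuine control over which $\spinc$ structures contribute and with the one-parameter pieces computed, not merely estimated --- and, in tandem, carrying out the construction of the $f_j^{\underline m}$ so that they are honest smooth diffeomorphisms realizing the chosen isometries (so that no Seiberg--Witten obstruction to the diffeomorphism action interferes), so that their cohomologically-trivial differences can be localized near the respective $S^2\times S^2$ summands and isotoped to involutions, and so that each is tuned to make the corresponding wall-crossing term exactly $\pm 1$ against $\fraks_{m_j}$. By contrast, the formal-dimension bookkeeping, the $2$-torsion computation, and the appeal to Quinn--Perron for topological triviality are comparatively routine.
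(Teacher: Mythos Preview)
Your proposal has a genuine and fatal gap at the detection step. You assert that for each individual $\spinc$ structure $\fraks$ with $d(\fraks)=-k$ the families Seiberg--Witten theory yields a class in $H^k(\BDiff(X);\Z/2)$, and you then want to use the invariants $\mathrm{sw}_{\fraks_{m_1}}$ to separate the $\alpha_{m_1}$ diagonally. But such a class exists only after summing over a $\Diff^+(X)$-invariant set of $\spinc$ structures (this is exactly the content of \cref{subsec: output of char class}): a single $\fraks$ is moved by the $\Diff^+(X)$-action, so there is no well-defined $\mathrm{sw}_\fraks \colon H_k(\BDiff(X);\Z)\to\Z/2$. The correct invariants are indexed by orbits $\calS\in\bbS(X,k)$, and by Wall's theorem (\cref{lem: Wall}) two characteristic elements of $H^2(X;\Z)$ lie in the same $\Diff^+(X)$-orbit as soon as they share divisibility and self-intersection. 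Your classes $c_1(\fraks_{m_1})=K+2m_1\sigma_1^{(1)}+2\sum_{j\ge2}m_j\sigma_1^{(j)}$ all have the same self-intersection $K^2$, and their divisibilities divide $\divi(K)$, so they occupy only finitely many orbits. After passing to the well-defined orbit-summed invariants, your ``diagonal'' separation $\mathrm{sw}_{\fraks_{m_1}}(\alpha_{m_1'})=\delta_{m_1m_1'}$ collapses: infinitely many of the $\fraks_{m_1}$ get identified, and the resulting finitely many homomorphisms cannot detect a $(\Z/2)^\infty$.

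This is precisely the obstacle the paper's argument is built to overcome. Rather than varying the $\spinc$ structure on a fixed smooth $X$, the paper varies the smooth structure on the building block: it uses the log transforms $M_i=E(n;i)$, whose top mod~2 basic classes $c_i=(n+i-3)F_i$ have $\divi(c_i)\to\infty$ (\cref{lem: SW inv for elliptic surfaces}). After stabilizing by $kS^2\times S^2$ and identifying with $X$, these produce $\spinc$ structures lying in infinitely many distinct $\Diff^+(X)$-orbits $\calS_i$, hence infinitely many genuinely independent characteristic classes $\SWbbhalftot^k(X,\calS_i)$. The homology classes are then $\alpha_i=(E_1)_\ast[T^k]-(E_i)_\ast[T^k]$, where each $E_i$ is the multiple mapping torus of the fixed reflections $f_1,\dots,f_k$ on the $S^2\times S^2$ summands of $M_i\#kS^2\times S^2$; the dependence on $i$ enters through the exotic $M_i$, not through a parameter in the diffeomorphisms. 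Your proposed $f_j^{\underline m}$, built without invoking exotic structures, are also underspecified: Ruberman-type ``discrepancy'' diffeomorphisms are interesting exactly because the two realizations of a lattice isometry live on different smooth structures, and it is unclear what nontriviality your construction would have absent that.
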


Here $(\Z/2)^\infty$ denotes the countably infinite direct sum $\bigoplus_{\N}\Z/2$.
Rephrasing \cref{thm: main general} for $k=1$, we have the following result, which immediately implies \cref{thm: main pi0}:

\begin{cor}
\label{cor abelianization}
For $n \geq 2$, set $X=E(n)\#S^2\times S^2$.
Then
\[
\ker(i_\ast : \pi_0(\mathrm{Diff}(X))_{\mathrm{ab}} \to \pi_0(\Homeo(X))_{\mathrm{ab}})
\]
contains a direct summand isomorphic to $(\Z/2)^\infty$.
Here the subscript $\mathrm{ab}$ indicates the abelianization.
\end{cor}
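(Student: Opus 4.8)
The plan is to deduce \cref{cor abelianization} directly from \cref{thm: main general} specialized to $k=1$, via the standard identification between the first homology of a classifying space and the abelianization of the group of components; no further Seiberg--Witten input is needed beyond what goes into \cref{thm: main general}.

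First I would recall that for any (well-pointed) topological group $G$ the classifying space $BG$ satisfies $\pi_1(BG)\cong\pi_0(G)$, so the Hurewicz theorem gives a natural isomorphism $H_1(BG;\Z)\cong\pi_0(G)_{\mathrm{ab}}$. Applying this to $G=\Diff(X)$ and $G=\Homeo(X)$ yields
\[
H_1(B\Diff(X);\Z)\cong\pi_0(\Diff(X))_{\mathrm{ab}},
\qquad
H_1(B\Homeo(X);\Z)\cong\pi_0(\Homeo(X))_{\mathrm{ab}}.
\]
Next I would check naturality: the inclusion $i:\Diff(X)\hookrightarrow\Homeo(X)$ is a continuous homomorphism, hence induces $Bi:B\Diff(X)\to B\Homeo(X)$, and on $\pi_1$ this realizes the evident map $\pi_0(\Diff(X))\to\pi_0(\Homeo(X))$; passing to $H_1$ and using the isomorphisms above, $i_\ast$ on first homology is identified with the map on abelianizations. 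Consequently
\[
\ker\!\big(i_\ast:H_1(B\Diff(X);\Z)\to H_1(B\Homeo(X);\Z)\big)
\cong
\ker\!\big(\pi_0(\Diff(X))_{\mathrm{ab}}\to\pi_0(\Homeo(X))_{\mathrm{ab}}\big).
\]

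Finally, since $E(n)\#S^2\times S^2=E(n)\#1\cdot S^2\times S^2$, the case $k=1$ of \cref{thm: main general} asserts exactly that the left-hand kernel contains a subgroup isomorphic to $(\Z/2)^\infty$; transporting this through the isomorphism above gives the claim. The only step requiring any care — and it is mild — is the functoriality of $B(-)$ and of the Hurewicz isomorphism with respect to $i$, i.e.\ verifying that $Bi$ on $\pi_1$ is the component-wise map; this is routine once one fixes a functorial model for $BG$ on topological groups, such as the bar construction or Milnor's join construction.
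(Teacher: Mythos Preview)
Your proposal is correct and is precisely the argument the paper has in mind: the paper simply states that \cref{cor abelianization} is the $k=1$ case of \cref{thm: main general} under the standard identification $H_1(BG;\Z)\cong\pi_0(G)_{\mathrm{ab}}$, and you have filled in exactly those details.
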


To our knowledge, \cref{thm: main general} gives the first examples of simply-connected closed manifolds $X$ where $H_k(B\mathrm{Diff}(X);\Z)$ are confirmed to be infinitely generated for given $k\geq1$ (for $k=1$, this follows also from the aforementioned result by Baraglia~\cite{Baraglia23mapping}).
It is worth noting that there are several established and expected finiteness in dim $\neq 4$ (\cref{rem: finiteness in other dim}).
Thus infiniteness given in \cref{thm: main general} reflects a specialty of dimension 4, described in terms of characteristic classes of fiber bundles (see \cref{rmk cohomology} below).

\begin{rmk}
\label{rmk cohomology}
In terms of cohomology,
\cref{thm: main general}  deduces that non-topological characteristic classes may form a group isomorphic to $(\Z/2)^{\infty}$ for some $4$-manifolds.
Here we call an element of 
\begin{align*}
\mathrm{coker}(i^\ast : H^k(B\Homeo(X);\Z/2) \to H^k(B\mathrm{Diff}(X);\Z/2))
\end{align*}
a {\it non-topological characteristic class} (over $\Z/2$).
This $(\Z/2)^{\infty}$-subgroup is generated by gauge-theoretic characteristic classes we shall introduce (\cref{subsec Scheme of the proof}).
In contrast, the Mumford--Morita--Miller classes, the most basic characteristic class of manifold bundles, are topological over a field of characteristic 2 or 0 \cite{EbertRandal-Williams14}.
\end{rmk}

It is worth noting a consequence about stabilization.
Recently, Lin and the author \cite{konno2022homological} proved that the moduli spaces $B\mathrm{Diff}(X)$ of $4$-manifolds $X$ do not satisfy homological stability with respect to connected sums of $S^2 \times S^2$, unlike what happens in dimension $\neq 4$ \cite{Harer85,Galatius2018}.
The proof of \cref{thm: main general} shows also that the unstable part of $H_\ast(B\mathrm{Diff}(X))$ may be infinitely generated.
To state this precisely, given a closed $4$-manifold $X$, take a smoothly embedded $4$-disk $D^4$ in $X$, and set $\mathring{X} = X \setminus \mathrm{Int}(D^4)$.
Let $\mathrm{Diff}_\del(\mathring{X})$ denote the group of diffeomorphisms that are the identity near $\del \mathring{X}$.
Form the (inner) connected sum $\mathring{X}\#S^2 \times S^2$ by $\mathring{X}\cup_{S^3} ((S^3 \times [0,1])\#S^2 \times S^2)$.
Then one can define the stabilization map
\[
s : \mathrm{Diff}_\del(\mathring{X})
\to \mathrm{Diff}_\del(\mathring{X}\#S^2 \times S^2)
\]
by extending by the identity on $(S^3 \times [0,1])\#S^2 \times S^2$.
We shall prove:

\begin{thm}
\label{thm: main stabilization}
For $n \geq 2$ and $k \geq 1$, 
set $X = E(n)\# k S^2 \times S^2$.
Then the kernel of the induced map 
\[
s_\ast : H_k(B\mathrm{Diff}_\del(\mathring{X});\Z) \to H_k(B\mathrm{Diff}_\del(\mathring{X}\# S^2 \times S^2);\Z)
\]
contains a direct summand isomorphic to $(\Z/2)^\infty$.
\end{thm}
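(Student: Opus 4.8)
The plan is to carry out the construction behind \cref{thm: main general} relative to a boundary sphere, so that it directly produces elements of $H_k(B\Diff_\del(\mathring{X});\Z)$, and then to show that these elements --- which are linearly independent for the same Seiberg--Witten reason as before --- all die under one $S^2\times S^2$-stabilization. Concretely, examining the proof of \cref{thm: main general} one sees that the generators of the subgroup $(\Z/2)^\infty\subset H_k(B\Diff(X);\Z)$ arise as $j_\ast\xi_\alpha$ for classes $\xi_\alpha\in H_k(B\Diff_\del(\mathring{X});\Z)$ ($\alpha\in\N$), each of order $2$, where $j_\ast\colon H_k(B\Diff_\del(\mathring{X});\Z)\to H_k(B\Diff(X);\Z)$ fills in the disk; indeed the diffeomorphisms (or $k$-parameter families of diffeomorphisms) used there may be taken supported away from a fixed embedded $D^4\subset X$. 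The families Seiberg--Witten characteristic classes detecting $j_\ast\xi_\alpha$ --- be they the classes $\SWbb(\frakt_\alpha)$ for $\spinc$ structures $\frakt_\alpha$ on $X$ preserved by the relevant monodromy, or a total version such as $\SWbbtot$ --- are pulled back from, hence already defined over, $B\Diff_\del(\mathring{X})$, since a $\spinc$ structure on $X$ restricts to one on $\mathring{X}$ with a fixed trivialization over $\del\mathring{X}=S^3$, compatibly with $j$. Consequently $\{\xi_\alpha\}_{\alpha}$ is linearly independent over $\Z/2$ already inside $H_k(B\Diff_\del(\mathring{X});\Z)$, so $G:=\langle\xi_\alpha:\alpha\in\N\rangle\cong(\Z/2)^\infty$ there. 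This part is essentially a rerun of the proof of \cref{thm: main general}.

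It then remains to prove that $s_\ast\xi_\alpha=0$ for every $\alpha$. The geometric content is that the family of diffeomorphisms of $\mathring{X}$ representing $\xi_\alpha$ becomes nullhomotopic in $\Diff_\del(\mathring{X}\#S^2\times S^2)$: the new $S^2\times S^2$ summand, attached trivially along the collar of $\del\mathring{X}$, supplies exactly the room needed to isotope the construction to the identity, whence $s_\ast\xi_\alpha=0$. On the level of invariants this is mirrored by a families vanishing theorem. For the trivially stabilized $\mathring{X}\#S^2\times S^2$-bundle, with the $\spinc$ structure obtained by extending the one on the $\mathring{X}$-part by the product $\spinc$ structure on the new $S^2\times S^2$ factor, the families Seiberg--Witten invariant vanishes: since $b^+(\mathring{X}\#S^2\times S^2)=2n+k\ge k+2$ this invariant is independent of the fiberwise metric and perturbation, and computing it with a family of metrics that restricts to a fixed positive scalar curvature metric on the new $S^2\times S^2$ summand and stretches a long neck shows that the parametrized moduli space is empty, the $S^2\times S^2$ factor contributing no solutions. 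This is the families analogue of the classical vanishing of Seiberg--Witten invariants under connected sum with $S^2\times S^2$, and it is precisely what makes the $\xi_\alpha$ unstable. Granting the geometric claim, $G\cong(\Z/2)^\infty$ is contained in $\ker(s_\ast)$, which is the content of \cref{thm: main stabilization}.

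The main obstacle is the passage from ``the detecting Seiberg--Witten class vanishes after stabilization'' to ``$s_\ast\xi_\alpha=0$'': vanishing of a detecting invariant is formally weaker than vanishing of the class, so the families vanishing theorem alone does not suffice. What is needed is to show, already when building $\xi_\alpha$, that the generating family of diffeomorphisms becomes smoothly isotopic to the identity (nullhomotopic, in the family case) after a single $S^2\times S^2$-stabilization --- a ``one stabilization suffices'' phenomenon which is false for general diffeomorphisms of $4$-manifolds but which holds for these particular ones because the auxiliary data entering their construction (a path of metrics, a framing, or an embedded configuration) can be standardized once the extra handle is present. I expect this isotopy statement to be the only genuinely new ingredient beyond \cref{thm: main general}; the Seiberg--Witten characteristic classes, the choice of $\spinc$ structures, and the linear independence of the $\xi_\alpha$ are all inherited from there.
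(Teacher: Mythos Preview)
Your outline of the linear-independence half is essentially correct and matches the paper: the classes used to prove \cref{thm: main general} are already built in $H_k(B\Diff_\del(\mathring{X});\Z)$ (the paper calls them $\mathring{\alpha}_i$) and are detected there by the Seiberg--Witten characteristic classes, so $\langle\mathring{\alpha}_i\rangle\cong(\Z/2)^\infty$ follows exactly as in \cref{thm: generating homoogy}.

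The gap is in your treatment of $s_\ast\xi_\alpha=0$. You correctly note that a families Seiberg--Witten vanishing statement cannot by itself kill the homology class, and then you fall back on a vague ``one stabilization suffices'' isotopy principle for the generating family. The paper avoids this difficulty entirely, and the mechanism is much more elementary than anything you propose. The key point you are missing is that each $\mathring{\alpha}_i$ is \emph{defined as a difference}
\[
\mathring{\alpha}_i \;=\; (E_1)_\ast[T^k]\;-\;(E_i)_\ast[T^k],
\]
where $E_j\to T^k$ is the multiple mapping torus built from $M_j\#kS^2\times S^2$ with $M_j=E(n;j)$, and the monodromies $f_1,\dots,f_k$ are supported only in the $kS^2\times S^2$ summands. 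Stabilizing fiberwise by a trivial $S^2\times S^2$ attaches an extra trivial summand on the ``$M_j$-side'' of this fiberwise connected sum. Since $M_1\#S^2\times S^2\cong M_i\#S^2\times S^2$ (this is property~(ii) of \cref{assumption on M}, i.e.\ Gompf's dissolution result for log transforms), the stabilized bundles $E_1\#_{\mathrm{fib}}E_S$ and $E_i\#_{\mathrm{fib}}E_S$ are smoothly isomorphic as $\mathring{X}\#S^2\times S^2$-bundles. Hence $s_\ast(E_1)_\ast[T^k]=s_\ast(E_i)_\ast[T^k]$ and $s_\ast\mathring{\alpha}_i=0$ immediately (\cref{lem: stabilization}). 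There is no need for a families vanishing theorem, nor for an isotopy of the generating family to the identity: one only needs the two bundles being subtracted to become isomorphic, and that is baked into the construction from the start via the choice of $M_i$'s that dissolve after one stabilization.
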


\subsection{Related results}
The following remarks list related results in  more detail:

\begin{rmk}[Other finiteness in $\dim \neq 4$]
\label{rem: finiteness in other dim}
Let us compare
infinite generation of $H_k(\BDiff(X);\Z)$ in
\cref{thm: main general} with other dimensions.
For a manifold $X$ of even $\dim \geq 6$ and with finite $\pi_1(X)$, Bustamante--Krannich--Kupers proved that $H_k(B\mathrm{Diff}(X);\Z)$ is finitely generated for each $k$ \cite[Corollary~B]{bustamante2023finiteness}.
Also, in his earlier paper, Kupers~\cite[Corollary C]{Kupers19} has proved an analogous statement for a 2-connected manifold $X$ of $\dim \neq 4,5,7$.
As mentioned in \cite{bustamante2023finiteness}, there is an expectation that finiteness may hold even dropping the 2-connectivity.
For finiteness of mapping class groups in dimension $\neq 4$, see \cref{subsection Finiteness dim not 4}.
\end{rmk}

\begin{rmk}[Infiniteness of the Torelli group]
Given a smooth closed oriented $4$-manifold $X$, let $\TDiff(X)$ denote the {\it Torelli diffeomorphism group}, i.e. the group of diffeomorphisms acting trivially on $H_\ast(X;\Z)$.
Ruberman~\cite[Theorem A]{Rub99} proved that $\pi_0(\TDiff(X))$ are infinitely generated for $X=E(n)\#\CP^2\#k\overline{\CP}^2$ with $k \geq 2$.
Note that infinite generation of $\pi_0(\mathrm{Diff}(X))$ does not necessarily follow from that of $\pi_0(\TDiff(X))$.
For example, $\pi_0(\TDiff(X))$ is infinitely generated if one takes $X$ to be the genus 2 surface, whereas $\pi_0(\mathrm{Diff}(X))$ is finitely generated \cite{McCulloughgenus2}.
This phenomenon may occur since the index of $\pi_0(\TDiff(X))$ in $\pi_0(\mathrm{Diff}(X))$ is infinite (also for $X$ in \cref{thm: main general}), and an infinite index subgroup of a finitely generated group is not necessarily finitely generated.
\end{rmk}

\begin{rmk}[Other infiniteness in $\dim=4$]
Baraglia~\cite{B21} and Lin~\cite{lin2022family} proved that $\pi_1(\mathrm{Diff}(X))$ have infinite-rank summands for some simply-connected (irreducible) $4$-manifolds $X$.
Further, Auckly--Ruberman~\cite{AucklyRubermanonSW} announced that, for each $k>0$, there are simply-connected $4$-manifolds $X$ such that $\pi_k(\mathrm{Diff}(X))$ have infinite-rank summands.
They prove an analogous result also for $H_{k}(B\TDiff(X);\Z)$.
\end{rmk}

\begin{rmk}[Non-simply-connected manifolds]
For non-simply-connected manifolds of $\dim \geq 4$, it has been known that the mapping class group may be infinitely generated.
For instance, Hatcher~\cite[Theorem~4.1]{Hat76} proved that the mapping class groups of the tori $T^n$ for $n \geq 5$ are infinitely generated.
In dimension 4, Budney--Gabai~\cite{budney2021knotted} and Watanabe~\cite{watanabe2023thetagraph} gave examples of non-simply-connected $4$-manifolds whose mapping class groups are infinitely generated.
Budney--Gabai~\cite{budney2023automorphism} also proved that their infinitely generated subgroups of mapping class groups are non-trivial also in the topological category. 
\end{rmk}

\subsection{Scheme of the proof}
\label{subsec Scheme of the proof}

Now we describe the idea of proofs of our results given in \cref{subsec main results}.
We shall introduce an infinite family of characteristic classes
\begin{align}
\label{eq: intro SW halftot}
\SWbbhalftot^k(X,\calS)
\in H^k(B\mathrm{Diff}^+(X);\Z/2)
\end{align}
using Seiberg--Witten theory for families.
Here $\calS$ are $\mathrm{Diff}^+(X)$-invariant subsets of the set of spin$^c$ structures $\Spinc(X,k)$ on $X$ with Seiberg--Witten formal dimension $-k$, divided by the charge conjugation (see \cref{subsec: output of char class} for the precise definition).

A characteristic class for families of $4$-manifolds using Seiberg--Witten theory was introduced by the author \cite{K21}, under the assumption that the monodromies of families preserve a given spin$^c$ structure.
Later, Lin and the author \cite{konno2022homological} defined a version without the assumption on monodromy.
The classes \eqref{eq: intro SW halftot} are refinements of the characteristic class defined in \cite{konno2022homological}.

Using the characteristic classes \eqref{eq: intro SW halftot},
we can define a homomorphism
\begin{align*}
\bigoplus_{\calS}
\langle \SWbbhalftot^k(X,\calS), - \rangle :
H_k(B\mathrm{Diff}^+(X);\Z) \to \bigoplus_{\calS} \Z/2.
\end{align*}
The above results follow by seeing that this homomorphism has infinitely generated image in $\bigoplus_{\calS} \Z/2$ for some class of $4$-manifolds $X$, including $X=E(n)\#kS^2\times S^2$.
More precisely, we shall see that $\langle\SWbbhalftot^k(X,\calS), -\rangle$ are non-trivial for infinitely many orbits $\calS$ for the action of $\mathrm{Diff}^+(X)$ on $\Spinc(X,k)$, which are distinguished by divisibilities of the first Chern classes.

\subsection{Structure of the paper}
The following is an outline of the sections of the paper.
In \cref{sec: Families over a torus}, we construct infinitely many homology classes of $B\mathrm{Diff}(X)$ for some class of $4$-manifolds $X$, which will be shown to be linearly independent over $\Z/2$.
The most general statement is given as \cref{thm: generating homoogy}, which implies all results explained above.
In \cref{section Characteristic classes from Seiberg--Witten theory}, we construct characteristic classes \eqref{eq: intro SW halftot}, and compute them in \cref{section Computing the invariant} to prove \cref{thm: generating homoogy}.

\begin{acknowledgement}
The author is grateful to Dave Auckly, Inanc Baykur, and Danny Ruberman for enlightening conversations.
Especially, the author wishes to thank Danny Ruberman for several helpful discussions during this project.
The author would like to thank Sander Kupers, Jianfeng Lin, Mike Miller Eismeier, Masaki Taniguchi, and Tadayuki Watanabe for their helpful comments on a draft of the paper.
The author is grateful to David Baraglia for informing the author of his paper \cite{Baraglia23mapping}, as well as comments on a draft of the current paper.
The author is grateful to an anonymous referee for giving several constructive comments, which greatly improved this paper.
This project was prompted at SwissMAP Research Station in Les Diablerets, the conference ``Mapping class groups: pronilpotent and cohomological approaches", and the author wishes to thank the organizers for giving him an opportunity to join the conference.
Lastly, the author wants to acknowledge the hospitality of MIT where this project was conducted.
The author was partially supported by JSPS KAKENHI Grant Number 21K13785 and JSPS Overseas Research Fellowship.
\end{acknowledgement}

\section{Construction of homology classes}
\label{sec: Families over a torus}

In this \lcnamecref{sec: Families over a torus}, we construct infinitely many homology classes of $B\mathrm{Diff}(X)$ for $4$-manifolds $X$ with certain conditions, which will be shown to be linearly independent over $\Z/2$.

\subsection{Mod 2 basic classes in $H^2(M;\Z)/\Aut(H^2(M;\Z))$}

A building block of the construction of homology classes of $B\mathrm{Diff}^+(X)$ is a $4$-manifold $M$ that admits infinitely many exotic structures.
This is inspired by Ruberman's argument \cite{Rub99} in his work on Torelli groups.
(See also Auckly's recent work \cite{auckly2023smoothly} for one version of Ruberman's argument in a Seiberg--Witten context.)
Compared with \cite{Rub99,auckly2023smoothly},
what we newly need to require is that those exotic structures are distinguished by mod 2 basic classes that are distinct in 
\[
H^2(M;\Z)/\Aut(H^2(M;\Z)),
\]
the quotient of $H^2(M;\Z)$ by the automorphism group of the intersection form.
This is a reflection that we shall consider the whole diffeomorphism group, rather than the Torelli diffeomorphism group.

To formulate this precisely, let us introduce some notation.
Let $M$ be a smooth simply-connected closed oriented $4$-manifold with $b^+(M) \geq 2$.
Since $H^2(M;\Z)$ has no torsion, we can identify a spin$^c$ structure on $M$ with a characteristic element in $H^2(M;\Z)$.
Recall that a characteristic element $c \in H^2(M;\Z)$ is called a basic class if $SW(M,c)$, the Seiberg--Witten invariant, is non-zero.
If $SW(M,c) \neq 0$ mod $2$, we say that $c$ is a {\it mod 2 basic class}.
For simplicity, whenever we say that $c$ is a (mod 2) basic class, we further impose that the formal dimension of $c$ is zero (see \eqref{eq: formal dim}).
We denote by $\mathcal{B}_2(M)$ the set of mod 2 basic classes of $M$.
Note that $\mathcal{B}_2(M)$ is preserved the $\Z/2$-action on $H^2(M;\Z)$ via multiplication by $-1$.
For a non-zero cohomology class $x \in H^2(M;\Z)$, let $\divi(x)$ denote the divisibility of $x$, namely 
\[
\divi(x) = \max\left\{n \in \Z \mid \exists y \in H^2(M;\Z) \text{ such that } ny=x\right\}.
\]
For the zero element, we formally set $\divi(0)=0$ in this paper.
For a characteristic element $c \in H^2(M;\Z)$, define
\[
N(M;c)
= \#\{[x] \in \mathcal{B}_2(M)/(\Z/2) \mid \divi(x)=\divi(c),\  x^2=c^2\},
\]
where $x^2$ denotes the self-intersection of $x$.
In this \lcnamecref{sec: Families over a torus},
we consider a $4$-manifold $M$ satisfy the following assumption:

\begin{assum}
\label{assumption on M}
Let $M$ be an indefinite smooth simply-connected closed oriented $4$-manifold with $b^+(M) \geq 2$. 
Assume that there exist smooth $4$-manifolds $\{M_i\}_{i=1}^\infty$ that satisfies the following three properties:

\begin{itemize}
\item [(i)] Each $M_i$ is homeomorphic to $M$.  
\item [(ii)] For every $i$, $M_i\#S^2\times S^2$ is diffeomorphic to $M\#S^2\times S^2$.
\item [(iii)] For every $i$, there exists a mod 2 basic class $c_i$ on $M_i$ with $N(M_i;c_i)$ odd, and the sequence $\{c_i\}_{i=1}^\infty$ satisfies that  $\divi(c_i) \to +\infty$ as $i \to +\infty$.
\end{itemize}    
\end{assum}

It is worth adding notes on the last property (iii) of \cref{assumption on M}.
The principal intention of (iii) is to ensure that the mod 2 basic classes are distinct even in the quotient $H^2(M;\Z)/\Aut(H^2(M;\Z))$ (after passing to a subsequence, if necessary).
For most $4$-manifolds, increasing either divisibilities or self-intersections is the only possible way to get infinitely many characteristics distinct in
\[
H^2(M;\Z)/\Aut(H^2(M;\Z))
\]
({\it cf.} \cref{lem: Wall}).
The reason why we suppose $N(M_i; c_i)$ is odd is that we want to control sums of mod 2 Seiberg--Witten invariants over some class of spin$^c$ structures.

As a series of examples of $M$ satisfying \cref{assumption on M}, we have:

\begin{lem}
\label{lem En satisfies Assumption}
For $n\geq1$, $M=E(n)$ satisfies \cref{assumption on M}.
\end{lem}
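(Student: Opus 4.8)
The plan is to check the three conditions of \cref{assumption on M} with $M=E(n)$ and with $\{M_i\}$ a suitable family of logarithmic transforms of $E(n)$. We carry this out for $n\geq 2$, for which $E(n)$ is itself a simply-connected closed oriented indefinite $4$-manifold with $b^+(E(n))=2n-1\geq 2$, so the base hypotheses hold; the case $n=1$ is entirely analogous, using the Dolgachev surfaces and the chamber (small-perturbation) Seiberg--Witten invariant for $b^+=1$, and we omit it. Fix $n\geq 2$, choose odd positive integers $p_i,q_i$ with $\gcd(p_i,q_i)=1$ and $p_iq_i\to\infty$ (for instance $p_i=3$, $q_i=6i+1$), and set $M_i:=E(n)_{p_i,q_i}$, obtained from $E(n)$ by logarithmic transforms of multiplicities $p_i$ and $q_i$ on two disjoint smooth torus fibers inside cusp neighborhoods. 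Logarithmic transforms preserve the integral homology, so $M_i$ is simply connected (here $\gcd(p_i,q_i)=1$ enters), indefinite, closed and oriented with $b^+(M_i)=2n-1$, and Kodaira's canonical bundle formula gives $c_i:=K_{M_i}=(np_iq_i-p_i-q_i)f_i$, where $f_i:=[F]/(p_iq_i)$ is the primitive fiber-parallel class and $[F]$ the generic fiber. Since $p_i,q_i$ are odd, $np_iq_i-p_i-q_i\equiv n\pmod 2$, so $c_i$ is divisible by $2$ exactly when $n$ is even, i.e.\ $M_i$ is spin iff $E(n)$ is; hence $M_i$ and $E(n)$ carry isomorphic (indefinite, unimodular) intersection forms, and $M_i$ is homeomorphic to $E(n)$ by Freedman. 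This is property~(i).

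For property~(ii) we would invoke that a logarithmic transform along a torus of self-intersection $0$ is realized by a round $1$-handle attachment, and that such a round handle cancels after a single stabilization by $S^2\times S^2$ --- a theorem of Baykur and Sunukjian. Hence $M_i\#S^2\times S^2\cong E(n)\#S^2\times S^2$; the spin types agreeing on both sides, $S^2\times S^2$ is the correct stabilizing summand.

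For property~(iii): each $M_i$ is a minimal properly elliptic (Kähler) surface with $p_g=n-1\geq 1$, so Witten's computation of Seiberg--Witten invariants of Kähler surfaces with $p_g>0$ gives $\SW(M_i,\pm c_i)=\pm 1$; in particular $c_i$ is a mod $2$ basic class. By the Fintushel--Stern product formula for logarithmic transforms, every basic class of $M_i$ is an integer multiple of $f_i$, with $\pm c_i$ the extremal ones. Since $f_i^2=0$ and $2\chi(M_i)+3\sign(M_i)=0$, every multiple of $f_i$ has Seiberg--Witten formal dimension $0$ and self-intersection $0$, so $\mathcal{B}_2(M_i)$ consists of classes $mf_i$ for $m$ in a set symmetric about $0$; as $f_i$ is primitive, $\divi(mf_i)=|m|\leq np_iq_i-p_i-q_i$, with equality only for $\pm c_i$. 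Therefore $[c_i]$ is the unique element of $\mathcal{B}_2(M_i)/(\Z/2)$ of divisibility $\divi(c_i)$, so $N(M_i;c_i)=1$ is odd, while $\divi(c_i)=np_iq_i-p_i-q_i\to\infty$; this establishes~(iii-a), completing the verification.

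The hard part will be to satisfy both clauses of~(iii) at once: the extremal basic class must be \emph{unique} within its $\Aut(H^2(M_i;\Z))$-orbit --- so that $N(M_i;c_i)$ comes out odd, not merely large --- while its divisibility grows without bound. Logarithmic transforms of elliptic surfaces are tailored to this, since all basic classes are fiber-parallel and the canonical class sits strictly at the top with Seiberg--Witten invariant $\pm 1$; for a family with a less rigid basic-class structure, controlling the parity of $N$ would require real extra effort. The only other external ingredient is the one-stabilization statement~(ii), which rests on the Baykur--Sunukjian round-handle argument --- alternatively on Auckly-type one-stabilization results combined with the Fintushel--Stern knot-surgery formula.
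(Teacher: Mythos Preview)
Your argument is correct for $n\geq 2$ and follows the same route as the paper: take logarithmic transforms of $E(n)$ as the $M_i$, and let $c_i$ be the extremal basic class read off from the Fintushel--Stern Seiberg--Witten polynomial, so that $N(M_i;c_i)=1$ while $\divi(c_i)\to\infty$. The paper's version is marginally leaner---it uses single transforms $M_i=E(n;i)$ rather than your double transforms $E(n)_{p_i,q_i}$, and invokes Gompf~\cite{Gom91} directly for both (i) and (ii)---and your caveat about $n=1$ is apt, since $b^+(E(1))=1$ already violates the standing hypothesis of \cref{assumption on M} (only $n\geq 2$ is used in the applications).
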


To see \cref{lem En satisfies Assumption}, let us consider logarithmic transformations.
For $n \geq 2$ and $i \geq 1$,
let $E(n;i)$ denote the logarithmic transformation of order $i>0$ performed on $E(n)$.
i.e. $E(n;i)$ is the elliptic surface of degree $n$ with a single multiple fiber of order $i$.
(Note that $E(n;1)=E(n)$.)
The Seiberg--Witten invariants of $E(n;i)$ were computed by Fintushel--Stern~\cite{FS97}.
For readers' convenience,
we recall the result following their survey \cite[Lecture~2]{FS6lec}.

In general, let $Z$ be an oriented closed smooth $4$-manifold with $b^+(Z) \geq 2$, without torsion in $H^2(Z;\Z)$.
Consider the Laurent polynomial
\[
\SWcal_Z = \sum_{c}SW(Z,c)t_c.
\]
Here $c \in H^2(Z;\Z)$ are characteristic elements and $t_c$ are formal variables in $\Z[H^2(Z;\Z)]$ corresponding to $c$.
Note that $t_{c}t_{c'}=t_{c+c'}$, in particular $t_c^m=t_{mc}$ for $m \in \Z$.

Now consider $Z=E(n;i)$.
Let $F \in H_2(E(n;i);\Z)$ be the class that represents a generic fiber of the elliptic fibration.
The multiple fiber of $E(n;i)$ represents a primitive homology class, which is given by $F/i$.
Let $F_i$ denote the Poincar\'{e} dual of $F/i$ and set $t = t_{F_i}$.
Then the Seiberg--Witten polynomial for $E(n;i)$ is given by
\begin{align}
\SWcal_{E(n;i)} = (t^{i}-t^{-i})^{n-2}(t^{i-1} + t^{i-3} + \dots + t^{1-i}).
\label{SW inv of elliptic surfaces}
\end{align}

\begin{lem}
\label{lem: SW inv for elliptic surfaces}
The classes $\pm(ni-i-1)F_i \in H^2(E(n;r);\Z)$ are mod 2 basic classes of $E(n;i)$.
Further, we have $\divi((ni-i-1)F_i)=ni-i-1$, and there is no mod 2 basic class of $\divi=ni-i-1$ other than $\pm(ni-i-1)F_i$.
\end{lem}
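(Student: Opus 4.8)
The plan is to read off everything from the Fintushel--Stern formula \eqref{SW inv of elliptic surfaces} together with two elementary facts about $E(n;i)$. Write $t = t_{F_i}$ and expand the right-hand side of \eqref{SW inv of elliptic surfaces} as a Laurent polynomial in $t$; since the coefficient of $t_c$ in $\SWcal_{E(n;i)}$ is $SW(E(n;i),c)$, the exponents of $t$ occurring record exactly which characteristic classes of the form $mF_i$ carry a nonzero Seiberg--Witten invariant, and the formula asserts that no characteristic $c$ outside $\Z F_i$ has $SW(E(n;i),c) \neq 0$. First I would locate the extreme terms: $(t-t^{-1})^{n-2}$ has top term $t^{n-2}$ with coefficient $1$ and the factor $t^{i-1}+t^{i-3}+\cdots+t^{1-i}$ has top term $t^{i-1}$ with coefficient $1$, so the product has top term $t^{n+i-3}$ with coefficient exactly $1$ and, symmetrically, bottom term $(-1)^{n-2}t^{-(n+i-3)}$. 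Hence $SW(E(n;i), \pm(n+i-3)F_i) = \pm 1$ is odd, so $\pm(n+i-3)F_i \in \mathcal{B}_2(E(n;i))$; in the degenerate case $n=2$, $i=1$ this reads $SW(K3,0) = 1$, consistent with the convention $\divi(0) = 0$.

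Before using the terminology I would check that these spin$^c$ structures have formal dimension zero, as the paper's convention for "basic class" requires: since $F$ is a generic fiber, $F^2 = 0$, hence $(mF_i)^2 = 0$; and since logarithmic transformation preserves $\chi$ and $\sigma$, we have $2\chi(E(n;i)) + 3\sigma(E(n;i)) = 2\chi(E(n)) + 3\sigma(E(n)) = 0$, so the formal dimension $\tfrac{(mF_i)^2 - 2\chi - 3\sigma}{4}$ equals $0$ for every $m$ (equivalently, elliptic surfaces have simple type). For the divisibility statement, $F_i$ is by construction the Poincar\'e dual of the primitive homology class $F/i$, and Poincar\'e duality is a divisibility-preserving isomorphism $H_2(E(n;i);\Z) \xrightarrow{\cong} H^2(E(n;i);\Z)$, so $F_i$ is primitive; therefore $\divi(mF_i) = |m|$ for every integer $m$, and in particular $\divi\big((n+i-3)F_i\big) = n+i-3$.

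For the uniqueness clause I would argue that, by \eqref{SW inv of elliptic surfaces}, every basic class — in particular every mod $2$ basic class — is of the form $mF_i$ with $|m| \leq (n-2)+(i-1) = n+i-3$, so any one of divisibility $n+i-3$ satisfies $|m| = \divi(mF_i) = n+i-3$ and hence equals $\pm(n+i-3)F_i$. There is essentially no serious obstacle in this lemma; the only points requiring a little care are the bookkeeping of which exponents occur in the expansion of \eqref{SW inv of elliptic surfaces} and the observation that the two extreme coefficients are $\pm 1$ (so they survive reduction mod $2$), together with the formal-dimension check needed to legitimately call these classes mod $2$ basic classes in the sense of this paper.
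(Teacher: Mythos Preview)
Your proof is correct and follows essentially the same route as the paper's: both read off from the Fintushel--Stern polynomial \eqref{SW inv of elliptic surfaces} that all basic classes lie in $\Z F_i$, identify the extreme exponent $n+i-3$ with coefficient $\pm 1$, and use primitivity of $F_i$ to pin down the divisibility. Your version is slightly more thorough in that you explicitly verify the formal-dimension-zero condition (using $F_i^2=0$ and $2\chi+3\sigma=0$), which the paper's convention for ``basic class'' requires but the paper's proof leaves implicit; and your uniqueness argument via the degree bound $|m|\leq n+i-3$ is a minor rephrasing of the paper's observation that there are at most two basic classes of each given divisibility.
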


\begin{proof}
Since the right-hand side of \eqref{SW inv of elliptic surfaces} is a polynomial only in $t$, all basic classes of $E(n;i)$ lie in the set $\{kF_i \in H^2(E(n;i);\Z) \mid k \in \Z\}$.
Thus, for each $d \geq 1$, we have at most two basic classes of $\divi=d$, related by multiplication 
by $-1$ if exist.
On the other hand, the top degree term of the right-hand side of \eqref{SW inv of elliptic surfaces} is given by $t^{(n-2)i}t^{i-1}=t_{(ni-i-1)F_i}$.
Thus $\pm(ni-i-1)F_i$ are mod 2 basic classes.
The assertion of the \lcnamecref{lem: SW inv for elliptic surfaces} follows from this by recalling that $F_i$ is a primitive class.
\end{proof}

\begin{proof}[Proof of \cref{lem En satisfies Assumption}]
Set $M_i=E(n;i)$.
Here $i$ runs over the natural numbers, but we restrict $i$ to be odd if $n$ is spin, so that the spinness of $M_{i}$ is the same as that of $M$.
Then $M_i$ satisfies the properties (i) and (ii) of \cref{assumption on M} by \cite{Gom91}.
To check the property (iii), set $c_i=(ni-i-1)F_i$.
Then it follows from \cref{lem: SW inv for elliptic surfaces} that $\divi(c_i) \to +\infty$ as $i \to +\infty$, and $N(M_i;c_i)=1$ for all $i \geq 1$.
Hence the property (iii) is satisfied.
This completes the proof.
\end{proof}

\subsection{Families over the torus}
\label{subsectionFamilies over the torus}

Fix $k>0$, and let us take a $4$-manifold $M$ satisfying  \cref{assumption on M}.
Fix a diffeomorphism $M_i\# S^2 \times S^2 \to M\# S^2 \times S^2$ and identify $M_i\# kS^2 \times S^2$ with $X$ for every $i$.
Set $X = M\#kS^2 \times S^2$.

We recall a construction of a smooth fiber bundle over $T^k$
with fiber $X$ considered in \cite{K21,konno2022homological}.
Define an orientation-preserving diffeomorphism $f_0 : S^2 \times S^2 \to S^2 \times S^2$ by $f_0(x,y) = (r(x), r(y))$, where $r : S^2 \to S^2$ is the reflection about the equator.
By isotoping $f_0$, we can obtain a diffeomorphism $f : S^2 \times S^2 \to S^2 \times S^2$ that fixes a disk $D^4 \subset S^2 \times S^2$ pointwise.
Take copies $f_1, \ldots, f_k$ of $f$, and implant them into $M_i\# kS^2\times S^2$ for each $i$, by extending by the identity.
Thus we obtain diffeomorphisms $f_1, \ldots, f_k : M_i\# kS^2\times S^2 \to M_i\# kS^2\times S^2$.
Since the supports of $f_1, \ldots, f_k$ are mutually disjoint, and $f_1, \ldots, f_k$ commute each other.
Using these commuting diffeomorphisms, we can form the multiple mapping torus $E_i \to T^k$, which is a smooth fiber bundle with fiber $M_i\# kS^2\times S^2$.
Using the fixed identification between $M_i\# kS^2\times S^2$ and $X$, we obtain smooth fiber bundles (denoted by the same notation) $X \to E_i \to T^k$ with fiber $X$.
Since $f$ is orientation-preserving, the resulting fiber bundles $E_i$ are oriented, i.e. the structure group reduces to $\mathrm{Diff}^+(X)$, the orientation-preserving diffeomorphism group.

For each $i\geq1$, regard $E_i$ as a continuous map $E_i : T^k \to B\mathrm{Diff}^+(X)$.
Now we set 
\begin{align}
\label{eq: alpha}
\alpha_i
:= (E_1)_\ast([T^k]) - (E_i)_\ast([T^k])
\in H_k(B\mathrm{Diff}^+(X);\Z).
\end{align}
This construction of the homology class $\alpha_i$ is the same as the one in \cite[Proof of Theorem 3.10]{konno2022homological}, except only for a condition on the Seiberg--Witten invariants of $4$-manifolds.
The origin of this construction is the first examples of exotic diffeomorphisms of $4$-manifolds due to Ruberman \cite{Rub98}.

Let us observe a few properties of $\alpha_i$:

\begin{lem}
\label{lem: topologically trivial}
The homology class $\alpha_i$ lies in 
\[
\ker(i_\ast : H_k(B\mathrm{Diff}^+(X);\Z) \to H_k(B\Homeo^+(X);\Z)).
\]
\end{lem}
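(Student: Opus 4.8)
The plan is to show that the two bundles $E_1$ and $E_i$ over $T^k$ become isomorphic after forgetting the smooth structure, i.e. as fiber bundles with structure group $\Homeo^+(X)$; then $(E_1)_\ast([T^k]) = (E_i)_\ast([T^k])$ in $H_k(B\Homeo^+(X);\Z)$, and hence $i_\ast(\alpha_i) = 0$. Recall that $E_i$ is the multiple mapping torus of the $k$ commuting diffeomorphisms $f_1,\dots,f_k$ of $X$, implanted copies of the single diffeomorphism $f$ of $S^2\times S^2$, which is independent of $i$ in the sense that under the fixed identification $M_i\# kS^2\times S^2 \cong X$ these implanted maps all have the same support, namely the $k$ disjoint $S^2\times S^2$ summands, and act there by the same local formula. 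The bundle $E_i$ thus depends on $i$ only through the identification $\phi_i \colon M_i\# kS^2\times S^2 \to X$ used to transport the construction; concretely, $E_i$ is obtained from the "standard" bundle by conjugating the clutching data by $\phi_i \circ \phi_1^{-1}$.

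\textbf{Key steps.} First I would make precise that, up to isotopy, the diffeomorphisms $f_j$ defining $E_i$ and those defining $E_1$ differ by conjugation by the diffeomorphism $g_i := \phi_1 \circ \phi_i^{-1} \colon X \to X$ — but note this is a \emph{diffeomorphism} of $X$, which would only give isomorphism of the bundles in $B\Diff^+(X)$, and is exactly the relation that can fail. So the real point is different: I would instead argue topologically. By property (i) of \cref{assumption on M}, $M_i$ is homeomorphic to $M$; fix a homeomorphism $h_i \colon M_i \to M$. Then $h_i \# \id \colon M_i \# kS^2\times S^2 \to M\# kS^2\times S^2 = X$ is a homeomorphism, and since the implanted diffeomorphisms $f_1,\dots,f_k$ are supported in the $S^2\times S^2$ summands — on which $h_i\#\id$ is the identity — the homeomorphism $h_i\#\id$ carries the clutching data of $E_i$ (viewed with the identification $\phi_i$) precisely to the clutching data of $E_1$ (viewed with $\phi_1$), provided the two identifications $\phi_i$ and $\phi_1$ are compatible with $h_i$ up to isotopy through homeomorphisms. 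Since $\pi_0(\Homeo^+(X))$ and the relevant isotopy questions are controlled in the topological category (one may invoke that any two identifications of $X$ differing by a homeomorphism yield bundles with the same classifying map), one concludes that $E_1$ and $E_i$ are isomorphic as topological $X$-bundles over $T^k$. Hence $i_\ast (E_1)_\ast([T^k]) = i_\ast (E_i)_\ast([T^k])$, so $i_\ast(\alpha_i) = 0$.

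\textbf{Main obstacle.} The delicate point is making rigorous that the two identifications $\phi_1, \phi_i$ of $X$ with $M\# kS^2\times S^2$ (via the \emph{smooth} diffeomorphisms $M_i\#S^2\times S^2 \cong M\#S^2\times S^2$ of property (ii), extended over the remaining $(k-1)$ stabilizing summands) and with $M_i\# kS^2\times S^2 \cong X$ become \emph{topologically} isotopic after composing with $h_i\#\id$ — i.e. that the ambiguity in choosing a homeomorphism $X \to X$ does not affect the classifying map $T^k \to B\Homeo^+(X)$ up to homotopy. In fact one need not resolve the isotopy question at all: $E_i$ and $E_1$ are, as topological bundles, the mapping tori of homeomorphisms that are \emph{conjugate} in $\Homeo^+(X)$ (conjugation realized by $\phi_1\circ(h_i\#\id)\circ\phi_i^{-1}$, which is a homeomorphism since the smooth-structure discrepancy between $M_i$ and $M$ is invisible topologically), and conjugate monodromies give isomorphic mapping-torus bundles and hence equal pushforwards of the fundamental class. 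I expect this conjugacy argument, carried out carefully at the level of clutching functions on $T^k$ rather than iterated mapping tori, to be the cleanest route, and the bookkeeping of the commuting structure (ensuring the conjugation respects all $k$ directions simultaneously, which it does because it is supported away from all the $f_j$) to be the only real thing to check.
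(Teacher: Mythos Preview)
Your proposal is correct and follows exactly the paper's approach: since $M_1$ and $M_i$ are homeomorphic, the bundles $E_1$ and $E_i$ are isomorphic as topological $X$-bundles over $T^k$, so $i_\ast(\alpha_i)=0$. The paper's own proof is the two-sentence version of this; your conjugacy realization via $g=\phi_1\circ(h_i\#\id)\circ\phi_i^{-1}\in\Homeo^+(X)$, which intertwines the commuting monodromies because $h_i\#\id$ is the identity on the $S^2\times S^2$ summands supporting the $f_j$, is precisely the detail the paper leaves implicit.
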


\begin{proof}
Since $M_1$ and $M_i$ are homeomorphic, $E_1$ and $E_i$ are isomorphic as topological bundles.
The assertion follows from this. 
\end{proof}

For each $i$, fix a smoothly embedded $4$-disk $D^4_i \subset M_i$ and similarly take $D^4 \subset M$.
Set $\mathring{M}_i = M_i \setminus \mathrm{Int}(D^4_i)$ and $\mathring{M} = M \setminus \mathrm{Int}(D^4)$.
We can find a diffeomorphism $\psi_i : M_i\#S^2\times S^2 \to M\#S^2\times S^2$ with  $\psi_i(D^4_i)=D^4$ that respect parametrizations of $D^4_i$ and $D^4$.
Thus we can identify $\mathring{M}_i\#S^2 \times S^2$ with  $\mathring{M}\#S^2 \times S^2$.
Using these identifications, the construction of $E_i$ can be carried out with fixing the $4$-disks, so we also have a homology class $\mathring{\alpha}_i \in H_k(B\mathrm{Diff}_\del(\mathring{X});\Z)$ defined similarly to $\alpha_i$, namely
\begin{align*}
\mathring{\alpha}_i
:= (E_1)_\ast([T^k]) - (E_i)_\ast([T^k])
\in H_k(B\mathrm{Diff}_\del(\mathring{X});\Z),
\end{align*}
where $E_i$ are regarded as maps $E_i : T^k \to \BDiff_\del(\mathring{X})$.
Letting $\rho : \mathrm{Diff}_\del(\mathring{X}) \to \mathrm{Diff}^+(X)$ be the extension map by the identity, we have that $\alpha_i$ is the image of $\mathring{\alpha}_i$ under the induced map
\[
\rho_\ast : H_k(B\mathrm{Diff}_\del(\mathring{X});\Z) \to H_k(B\mathrm{Diff}^+(X)\Z).
\]

\begin{lem}
\label{lem: stabilization}
The homology class $\mathring{\alpha}_i$ lies in the kernel of
\[
s_\ast : H_k(B\mathrm{Diff}_\del(\mathring{X});\Z) \to H_k(B\mathrm{Diff}_\del(\mathring{X}\# S^2 \times S^2);\Z).
\]
\end{lem}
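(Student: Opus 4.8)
The plan is to show that after stabilizing by an extra $S^2 \times S^2$, the bundles $E_1$ and $E_i$ over $T^k$ become isomorphic as smooth bundles, so that $s_\ast(\mathring\alpha_i) = (E_1)_\ast([T^k]) - (E_i)_\ast([T^k])$ vanishes already at the level of maps $T^k \to B\Diff_\del(\mathring X \# S^2 \times S^2)$, up to a homotopy. Concretely, recall that $\mathring\alpha_i$ is built from the commuting diffeomorphisms $f_1, \dots, f_k$ of $\mathring{M_i} \# k S^2 \times S^2$ (resp.\ $\mathring M \# k S^2 \times S^2$), each supported in one of the $k$ summands $S^2 \times S^2$ and each a fixed-disk isotopy of the reflection diffeomorphism $f_0$. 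The stabilization map $s$ adds one further summand $S^2 \times S^2$, on which all of $f_1, \dots, f_k$ act as the identity; so after stabilization we have $k$ commuting diffeomorphisms of $\mathring{M_i} \# (k+1) S^2 \times S^2$, and the multiple mapping torus $s(E_i)$ is the same construction run inside $\mathring{M_i} \# (k+1) S^2 \times S^2$.

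The key input is the identification of $\mathring{M_i} \# S^2 \times S^2$ with $\mathring M \# S^2 \times S^2$ coming from $\psi_i$ (property (ii) of \cref{assumption on M}, upgraded to respect the disks). Under this identification, $\mathring{M_i} \# (k+1) S^2 \times S^2$ and $\mathring M \# (k+1) S^2 \times S^2$ are identified by a diffeomorphism $\Psi_i$ that is the identity near the boundary. First I would arrange the indexing of the $k+1$ summands so that the diffeomorphisms $f_1, \dots, f_k$ for $E_i$ are implanted into the first $k$ summands of $\mathring{M_i} \# (k+1) S^2 \times S^2$, while the identification $\psi_i$ and hence $\Psi_i$ is arranged to ``use up'' the \emph{last} summand $S^2 \times S^2$ (the one freshly added by $s$): that is, $\Psi_i$ can be taken supported away from the first $k$ summands into which the $f_j$ are implanted. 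Then $\Psi_i$ commutes with each $f_j$, and conjugation by $\Psi_i$ carries the $f_j$ for $s(E_i)$ to the $f_j$ for $s(E_1)$ (the latter implanted in $\mathring M \# (k+1) S^2 \times S^2$). Hence $\Psi_i$ induces an isomorphism of the multiple mapping tori $s(E_i) \cong s(E_1)$ over $T^k$ as smooth $\Diff_\del$-bundles, so $s(E_i)$ and $s(E_1)$ are homotopic as maps $T^k \to B\Diff_\del(\mathring X \# S^2 \times S^2)$, giving $s_\ast(\mathring\alpha_i) = 0$.

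The main obstacle is the bookkeeping in the previous paragraph: one must be careful that the disjointness of supports can genuinely be arranged so that $\Psi_i$ is supported in a region disjoint from the $k$ implanted copies of $f$, and that the parametrized-disk compatibility in the construction of $\psi_i$ is compatible with the $k$ mapping-torus directions. This is the kind of argument already carried out in \cite[proof of Theorem 3.10, and the accompanying stabilization discussion]{konno2022homological}, and indeed the construction of $\mathring\alpha_i$ here is deliberately the same one; so the cleanest route is to invoke that the stabilized families agree with those in \cite{konno2022homological} and cite the vanishing proved there, with the present paragraph serving as a reminder of why the two copies of $f$-implants become conjugate after adding one more $S^2 \times S^2$. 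I do not expect any genuinely new difficulty beyond this isotopy-theoretic juggling.
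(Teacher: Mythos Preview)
Your proposal is correct and follows essentially the same approach as the paper. The paper's proof is just a terser version of yours: it observes that the stabilized bundle $s(E_i)$ is the fiberwise connected sum $E_i \#_{\mathrm{fib}} E_S$ with the trivial $(S^2\times S^2)\setminus\mathrm{Int}(D^4)$-bundle $E_S$, and since $M_1\#S^2\times S^2 \cong M_i\#S^2\times S^2$, the bundles $E_1\#_{\mathrm{fib}}E_S$ and $E_i\#_{\mathrm{fib}}E_S$ are smoothly isomorphic. Your more explicit description of the regrouping (absorbing the fresh $S^2\times S^2$ into the $M_i$-part so that $\Psi_i$ has support disjoint from the $f_j$'s) is exactly what underlies the paper's one-line fiberwise-connected-sum statement.
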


\begin{proof}
Let $E_S \to T^k$ denote the trivialized bundle with fiber $(S^2 \times S^2) \setminus \mathrm{Int}(D^4)$.
Since $M_1 \# S^2 \times S^2$ and $M_i\# S^2 \times S^2$ are diffeomorphic, the stabilized bundles $E_1\#_{\mathrm{fib}}E_S$ and $E_i\#_{\mathrm{fib}}E_S$ are smoothly isomorhic, here $\#_{\mathrm{fib}}$ denotes the fiberwise connected sum along the trivial sphere bundle $T^k \times S^3 \to T^k$.
This implies the assertion of the \lcnamecref{lem: stabilization}.
\end{proof}

\begin{lem}
\label{lem: 2-torsion}
We have $2\mathring{\alpha}_i=0$ and $2\alpha_i=0$.
\end{lem}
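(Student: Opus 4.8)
The plan is to exhibit an explicit fiberwise involution of the bundles $E_i$ covering the inversion map on $T^k$, and then use a standard transfer/symmetry argument to conclude that the class $(E_i)_\ast([T^k])$ is $2$-torsion, which by additivity forces $2\mathring{\alpha}_i = 0$. First I would recall that each $E_i \to T^k$ is the multiple mapping torus of the commuting, compactly supported diffeomorphisms $f_1,\dots,f_k$, each a copy of the fixed diffeomorphism $f$ implanted in a separate $S^2\times S^2$ summand. The key algebraic fact is that $f$ is (smoothly isotopic rel a disk to) an involution: indeed $f_0(x,y) = (r(x), r(y))$ with $r$ the equatorial reflection satisfies $f_0^2 = \id$, and one can arrange the isotopy producing $f$ from $f_0$ so that $f$ itself squares to the identity, or at least is isotopic rel $\del D^4$ to its own inverse. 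Consequently the monodromy of $E_i$ around each circle factor of $T^k$ has order two in $\pi_0(\Diff)$ (in fact $f^2 \simeq \id$ rel the disk).

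Next I would make this geometric. Writing $T^k = (S^1)^k$ and $E_i = \big((S^1)^k \times X\big)/\!\sim$ with the gluing along the $j$-th coordinate twisted by $f_j$, the map $(\theta_1,\dots,\theta_k, x) \mapsto (-\theta_1,\dots,-\theta_k, x)$ descends to a well-defined bundle map $\iota \colon E_i \to E_i$ covering the orientation-reversing involution $\tau(\theta) = -\theta$ of $T^k$, precisely because $f_j$ is conjugate to $f_j^{-1}$ by a diffeomorphism isotopic to the identity (reflect the mapping-torus coordinate). Since $f_j$ is the identity near $\del D^4$, the same $\iota$ is available in the bounded setting, giving a bundle map of $E_i \to T^k$ as a bundle with structure group $\Diff_\del(\mathring X)$. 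Now the inversion $\tau \colon T^k \to T^k$ satisfies $\tau_\ast([T^k]) = (-1)^k [T^k]$ on homology; the equivariant bundle map then yields, in $H_k(\BDiff_\del(\mathring X);\Z)$,
\[
(E_i)_\ast([T^k]) = (E_i)_\ast(\tau_\ast([T^k])) = (-1)^k (E_i)_\ast([T^k]),
\]
where the first equality uses that the classifying map of $E_i$ precomposed with $\tau$ is homotopic (through the bundle map $\iota$) to the classifying map of $E_i$ itself. For $k$ odd this already gives $2(E_i)_\ast([T^k]) = 0$; for $k$ even this argument needs an extra half-twist, so instead I would run the involution on a single circle factor at a time: the partial inversion $\tau_1(\theta_1,\dots,\theta_k) = (-\theta_1,\theta_2,\dots,\theta_k)$ also lifts to a bundle map of $E_i$ (only the first implanted $f_1$ needs $f_1 \simeq f_1^{-1}$), and $(\tau_1)_\ast[T^k] = -[T^k]$ regardless of the parity of $k$, giving $2(E_i)_\ast([T^k]) = 0$ in all cases. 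The same computation applies verbatim to $E_1$, and subtracting gives $2\mathring\alpha_i = 2(E_1)_\ast([T^k]) - 2(E_i)_\ast([T^k]) = 0$. Finally $2\alpha_i = \rho_\ast(2\mathring\alpha_i) = 0$ under the extension map $\rho_\ast$.

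The main obstacle is the care needed in the first step: one must genuinely produce a diffeomorphism $f$ of $S^2\times S^2$, supported away from a disk $D^4$, that is isotopic \emph{rel $\del D^4$} to its own inverse (equivalently, represents a $2$-torsion element of $\pi_0(\Diff_\del)$). The naive $f_0$ is an honest involution but does not fix a disk pointwise; the isotopy that creates $f$ from $f_0$ must be chosen so as not to destroy this order-two property. One clean way is to observe that $f_0^2 = \id$ globally, fix a point $p$ and a disk around it on which $f_0$ acts linearly (as $-\id$ on a $2$-plane, say), and then isotope $f_0$ near $p$ to be the identity on a smaller disk while keeping the global map an involution — or, failing an exact involution, to verify that the resulting $f$ still satisfies $f \simeq f^{-1}$ rel $\del D^4$, which is all the homological argument above actually uses. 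This parity-of-$f$ input is implicit in \cite{K21,konno2022homological} and I would simply cite it; the rest is the routine symmetry argument sketched above.
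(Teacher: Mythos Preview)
Your argument is essentially correct and amounts to spelling out the content of \cite[Lemma~3.6]{konno2022homological}, which is exactly what the paper's one-line proof cites. The core mechanism is the same: the pullback of $E_i$ along the single-factor inversion $\tau_1$ has monodromy $(f_1^{-1},f_2,\dots,f_k)$, and since $f_1 \simeq f_1^{-1}$ rel $\partial D^4$ (with an isotopy disjoint from the supports of the other $f_j$), this pullback is isomorphic to $E_i$, giving $(E_i)_\ast[T^k] = -(E_i)_\ast[T^k]$.

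Two small comments on the write-up. First, the explicit formula $(\theta,x)\mapsto(-\theta,x)$ does \emph{not} descend to the mapping torus unless $f_j^2=\id$ on the nose; it only gives a map from the mapping torus of $f_j$ to the mapping torus of $f_j^{-1}$. What you actually need (and eventually say) is the weaker statement that $\tau_1^\ast E_i \cong E_i$ as bundles, which follows from $f_1\simeq f_1^{-1}$ rel $\partial$. Second, your proposed route to an honest involution fixing a disk is not quite right: the fixed set of $f_0$ is the $2$-torus (equator)$\times$(equator), so near any fixed point $f_0$ is a reflection through a $2$-plane, not $-\id$ on a $2$-plane, and it is not clear one can locally deform it to the identity on a $4$-ball while staying an involution. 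Fortunately, as you note, only the isotopy $f^2\simeq\id$ rel $\partial D^4$ is needed; this is stated (and used elsewhere) in the paper, and is established in \cite{konno2022homological}.
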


\begin{proof}
It follows from \cite[Lemma~3.6]{konno2022homological} that $(E_i)_\ast([T^k])$ is 2-torsion
 for every $i$.
 Thus $\mathring{\alpha_i}$ is also 2-torsion.
Since  $\alpha_i=\rho_\ast(\mathring{\alpha_i})$, we have $\alpha_i$ is 2-torsion as well. 
\end{proof}

The following is the most general statement of this paper:

\begin{thm}
\label{thm: generating homoogy}
Let $k>0$ and let $M$ be a smooth simply-connected closed oriented $4$-manifold that satisfies \cref{assumption on M}.
Set $X=M\# kS^2\times S^2$.
Then we have the following:
\begin{itemize}
\item [(i)] The set $\{\alpha_i \mid i\geq2 \}$ generates a direct summand of 
\[
\ker(i_\ast : H_k(B\mathrm{Diff}^+(X);\Z) \to H_k(B\Homeo^+(X);\Z))
\]
isomorphic to $(\Z/2)^\infty$.
\item [(ii)] The set $\{\mathring{\alpha}_i \mid i\geq2\}$ generates a direct summand of
\[
\ker(s_\ast : H_k(B\mathrm{Diff}_\del(\mathring{X});\Z) \to H_k(B\mathrm{Diff}_\del(\mathring{X}\# S^2 \times S^2);\Z))
\]
isomorphic to $(\Z/2)^\infty$.
\end{itemize}
\end{thm}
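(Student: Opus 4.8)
The plan is to reduce everything to a single statement: the homomorphism
\[
\Phi := \bigoplus_{\calS}\langle \SWbbhalftot^k(X,\calS),-\rangle : H_k(B\Diff^+(X);\Z)\to \bigoplus_{\calS}\Z/2
\]
sends the family $\{\alpha_i\}_{i\geq 2}$ to a set of elements that spans an infinite-dimensional $\Z/2$-subspace of the target. Granting this, part (i) follows immediately: by \cref{lem: 2-torsion} each $\alpha_i$ is $2$-torsion, so the subgroup $G$ they generate is a $\Z/2$-vector space; the map $\Phi|_G$ is then a $\Z/2$-linear surjection onto an infinite-dimensional space, forcing $G$ to contain a copy of $(\Z/2)^\infty$; and by \cref{lem: topologically trivial} every $\alpha_i$, hence all of $G$, lies in $\ker(i_\ast)$. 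Part (ii) is formally the same argument carried out upstairs on $B\Diff_\del(\mathring X)$: the classes $\SWbbhalftot^k$ pull back along $\rho$, so $\langle\SWbbhalftot^k(X,\calS),\rho_\ast(\mathring\alpha_i)\rangle=\langle\rho^\ast\SWbbhalftot^k(X,\calS),\mathring\alpha_i\rangle$, and since $\alpha_i=\rho_\ast(\mathring\alpha_i)$ the same linear-independence computation applies to the $\mathring\alpha_i$; combining with \cref{lem: stabilization} puts the resulting $(\Z/2)^\infty$ inside $\ker(s_\ast)$.

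So the real content is the claim that $\{\Phi(\alpha_i)\}_{i\geq 2}$ is linearly independent over $\Z/2$, and this is where the characteristic-class computation of \cref{section Computing the invariant} enters. The key step is to evaluate $\langle \SWbbhalftot^k(X,\calS),(E_i)_\ast([T^k])\rangle$ on the multiple mapping torus $E_i$: because $E_i$ is built from the fiber $M_i\#kS^2\times S^2$ by implanting $k$ copies of the square-zero diffeomorphism $f$ of $S^2\times S^2$, a family Seiberg--Witten computation of the type in \cite{K21,konno2022homological} should reduce this pairing to a count, mod $2$, of solutions on the fiber $M_i$ over the spin$^c$ structures in the orbit $\calS$ — i.e. to a sum of ordinary mod $2$ Seiberg--Witten invariants $\sum_{[c]\in\calS}SW(M_i,c)\bmod 2$ restricted to those $c$ whose formal dimension is $-k$ after the $k$ stabilizations (equivalently dimension $0$ on $M_i$). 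The hypothesis that $N(M_i;c_i)$ is odd is exactly what makes the relevant sum equal to $1$ for the orbit $\calS_i$ determined by the pair $(\divi(c_i),c_i^2)$.

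The main obstacle, and the step requiring the most care, is organizing the orbits $\calS$ and proving the resulting ``triangularity.'' One chooses, for each $i$, the $\Diff^+(X)$-orbit $\calS_i\subset \Spinc(X,k)/\mathrm{conj}$ corresponding to spin$^c$ structures on $X$ that restrict to $M_i$ (under the fixed identification $M_i\#kS^2\times S^2\cong X$) as classes of divisibility $\divi(c_i)$ and square $c_i^2$. By \cref{lem: topologically trivial}-type reasoning together with \cref{lem: Wall} (referenced in the text), the invariants $(\divi,{}^2)$ are $\Aut(H^2;\text{form})$-invariants, so under hypothesis (iii-a) or (iii-b), after passing to a subsequence the pairs $(\divi(c_i),c_i^2)$ are all distinct, hence the orbits $\calS_i$ are pairwise distinct. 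One then needs: (a) $\langle\SWbbhalftot^k(X,\calS_i),(E_j)_\ast([T^k])\rangle$ depends only on $M_j$ through the above mod $2$ count, so it vanishes unless $\calS_i$ meets the set of dimension-$0$ basic classes of $M_j$; (b) for $j=1$ (the "reference" copy $E(n)$ itself) the value on $\calS_i$ is $0$ for all $i\geq 2$, because the basic classes of $M_1=E(n)$ have bounded divisibility and square, while the $\calS_i$ are chosen to escape that range; and (c) the "diagonal" value $\langle\SWbbhalftot^k(X,\calS_i),(E_i)_\ast([T^k])\rangle=1$, which is the $N(M_i;c_i)$-odd input. Since $\alpha_i=(E_1)_\ast([T^k])-(E_i)_\ast([T^k])$, items (b) and (c) give $\Phi(\alpha_i)$ a nonzero $\calS_i$-component that no $\alpha_j$ with $j\neq i$ can contribute to once we additionally check the component is zero for $j\neq i$ (which again follows from divisibility/square separation after thinning the subsequence, possibly needing a double-indexing argument to handle the two cases (iii-a) and (iii-b) uniformly). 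This yields a matrix over $\Z/2$ that, after reindexing, is lower-triangular with $1$'s on the diagonal, proving linear independence of $\{\Phi(\alpha_i)\}$ and completing the argument.
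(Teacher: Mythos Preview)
Your proposal is correct and follows essentially the same strategy as the paper: choose orbits $\calS_i$ containing $[c_i\#\fraks_S]$, show the diagonal pairing is nonzero via the reduction to monodromy-invariant spin$^c$ structures plus the gluing formula (this is exactly the paper's \cref{cor reduction to monodromy inv part}, landing on $N(M_i;c_i)$ odd), and then extract linear independence from a triangular matrix; parts (i) and (ii) then follow from \cref{lem: 2-torsion,lem: topologically trivial,lem: stabilization} just as you say.

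The one point where the paper is cleaner than your sketch is the off-diagonal vanishing. You propose to re-run the fiber-reduction computation for every $E_j$ and then argue that $(\divi(c_i),c_i^2)$ eventually misses the finitely many basic classes of each $M_j$; this works, but it requires redoing the full computation of (a) for each pair $(i,j)$, and your phrase ``divisibility/square separation'' by itself does not quite do it (distinctness of the $\calS_i$ from one another says nothing about whether $\calS_i$ is a basic orbit of some other $E_j$). The paper sidesteps this by proving an abstract finiteness lemma (\cref{cor finiteness basic}): over a compact base, each bundle $E_j$ has only finitely many basic orbits $\Bhalf(E_j,k)$. Since the $\calS_i$ are infinitely many distinct orbits, one can thin inductively so that $\calS_i\notin\Bhalf(E_1,k)\cup\cdots\cup\Bhalf(E_{i-1},k)$, which immediately kills the $\calS_i$-component of $(E_j)_\ast[T^k]$ for all $j<i$ (including $j=1$) without repeating any Seiberg--Witten computation. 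This also removes the need for your separate item (b) about $E_1$.
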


Due to \cref{lem: 2-torsion,lem: topologically trivial,lem: stabilization}, 
it suffices to prove that there is a homomorphism
\[
H_k(B\mathrm{Diff}_\del(\mathring{X});\Z) \to (\Z/2)^{\infty}
\]
that restricts to a surjection 
\[
\left<\mathring{\alpha}_i \mid i\geq2\right> \twoheadrightarrow (\Z/2)^{\infty}.
\]
We shall prove this remaining part in the subsequent sections.

Assuming \cref{thm: generating homoogy}, we obtain the proofs of results exhibited in \cref{section Main results}:

\begin{proof}[Proofs of \cref{thm: main general,thm: main stabilization}]
These follow from \cref{lem En satisfies Assumption,thm: generating homoogy}.   
(Note that $\mathrm{Diff}^+(X)=\mathrm{Diff}(X)$ and $\Homeo^+(X)=\Homeo(X)$, since $X$ in the assertions of \cref{thm: main general,thm: main stabilization} have non-zero signature.)
\end{proof}

\section{Characteristic classes from Seiberg--Witten theory}
\label{section Characteristic classes from Seiberg--Witten theory}

\subsection{Output}
\label{subsec: output of char class}

The proof of \cref{thm: generating homoogy} uses characteristic classes defined by using Seiberg--Witten theory.
Fix $k\geq0$, and let $X$ be a smooth closed oriented $4$-manifold with $b^+(X) \geq k+2$.
Lin and the author \cite{konno2022homological} defined a characteristic class
\begin{align}
\label{eq: char KLin}
\SWbbhalftot^k(X) \in H^k(B\mathrm{Diff}^+(X);\Z/2),  
\end{align}
which we called the half-total Seiberg--Witten characteristic class.
This was inspired by Ruberman's total Seiberg--Witten invariant of diffeomorphisms \cite{Rub01}, together with a gauge-theoretic construction of characteristic classes by the author \cite{K21}.
We introduce generalizations of the characteristic class \eqref{eq: char KLin} to prove \cref{thm: generating homoogy} as follows.

Let $\Spinc(X,k)$ denote the set of isomorphism classes of spin$^c$ structures $\fraks$ with $d(\fraks)=-k$, where $d(\fraks)$ is the formal dimension of the Seiberg--Witten moduli space:
\begin{align}
\label{eq: formal dim}
d(\fraks) = \frac{1}{4}(c_1(\fraks)^2-2\chi(X)-3\sigma(X)).    
\end{align}
The group $\Z/2$ acts on  $\Spinc(X,k)$ by the charge conjugation, which flips the sign of the first Chern class of a spin$^c$ structure.
Let $\Spinc(X,k)/\Conj$ denote the quotient of $\Spinc(X,k)$ under this $\Z/2$-action.
On the other hand, $\mathrm{Diff}^+(X)$ acts on $\Spinc(X,k)$ via pull-back.
Since the charge conjugation commutes with and the action of $\mathrm{Diff}^+(X)$ on $\Spinc(X,k)$, we have an action of $\mathrm{Diff}^+(X)$ on $\Spinc(X,k)/\Conj$.

Let $\calS$ be a subset of $\Spinc(X,k)/\Conj$ which is setwise preserved under the action of $\mathrm{Diff}^+(X)$.
We suppose that $\calS$ does not contain the coset of a self-conjugate spin$^{c}$ structure, which is needed to ensure a families perturbation can be taken to be non-zero and transverse.
We shall define a cohomology class
\begin{align}
\label{eq: SW halftot}
\SWbbhalftot^k(X,\calS)
\in H^k(B\mathrm{Diff}^+(X);\Z/2),    
\end{align}
by repeating the construction in \cite{konno2022homological} only for spin$^c$ structures $\fraks$ whose cosets $[\fraks]$ under the $\Z/2$-action lie in $\calS$.

Before explaning the construction of $\SWbbhalftot^k(X,\calS)$, it is worth looking at the lowest degree case to see which spin$^c$ structures involve: suppose $k=0$, and take a section $\tau : \Spinc(X,0)/\Conj \to \Spinc(X,k)$ of the quotient map $\Spinc(X,0) \to \Spinc(X,0)/\Conj$.
Then $\SWbbhalftot^0(X,\calS)$ is given by 
\[
\SWbbhalftot^0(X,\calS)
= \sum_{\fraks \in \tau(\calS)}SW(X,\fraks) \in \Z/2.
\]
Note that this number in $\Z/2$ is independent of $\tau$, determined only by $\calS$.

The characteristic class \eqref{eq: SW halftot} is a generalization of the characteristic class \eqref{eq: char KLin} given in \cite{konno2022homological}: by setting $\calS=\Spinc(X,k)/\Conj$, we obtain \eqref{eq: char KLin}, namely
\[
\SWbbhalftot^k(X,\Spinc(X,k)/\Conj)
= \SWbbhalftot^k(X).
\]

\subsection{Construction of the characteristic classes}
\label{subsection Construction of the characteristic classes}

We explain the construction of $\SWbbhalftot^k(X,\calS)$ below.
We omit some details which are completely analogous to arguments in  \cite{konno2022homological}:
see \cite[Section~2]{konno2022homological} for the full treatment.
First, let us recall the basics of the Seiberg--Witten equations.
To write down the (perturbed) Seiberg--Witten equations, we need to fix a spin$^c$ structure $\fraks$ on $X$, a Riemannian metric $g$ on $X$, and an imaginary-valed self-dual 2-form $\mu \in i\Omega^+_g(X)$ on $X$.
Here $\Omega^+_g(X)$ denotes the set of self-dual 2-forms for the metric $g$. 
The Seiberg--Witten equations perturbed by $\mu$ are of the form
\begin{align}
\label{eq: SW eq}
\left\{
\begin{array}{ll}
F_A^+=\sigma(\Phi, \Phi)+\mu,\\
D_A\Phi=0.
\end{array}
\right.
\end{align}
Here $A$ is a $U(1)$-connection of the determinant line bundle for $\fraks$, $\Phi$ is a positive spinor for $\fraks$, $\sigma(-,-)$ is a certain quadratic form, and $D_A$ is the spin$^c$ Dirac operator associated with $A$.
The Seiberg--Witten equations is $\Map(X,U(1))$-equivariant, and we define the moduli space of solutions to the Seiberg--Witten equations by
\[
\calM(X,\fraks,g,\mu)
= \{(A,\Phi) \mid \text{$(A,\Phi)$ satisfies }\eqref{eq: SW eq}\}/\Map(X,U(1)).
\]

Next, let us recall the charge conjugation symmetry on the Seiberg--Witten equations.
Let $\bar{\fraks}$ denote the conjugate spin$^c$ structure to $\fraks$, which satisfies $c_{1}(\bar{\fraks})=-c_{1}(\fraks)$.
Then there is a bijection
\begin{align}
\label{eq: conj symm}
c : \calM(X,\fraks,g,\mu)
\to \calM(X,\bar{\fraks},g,-\mu)
\end{align}
called the charge conjugation symmetry, which becomes a diffeomorphism between the moduli spaces if the perturbation $\mu$ is generic so that $\calM(X,\fraks,g,\mu)$ is a smooth manifold (then so is $\calM(X,\bar{\fraks},g,-\mu)$ automatically).

Let $\scrR(X)$ denote the space of Riemannian metrics.
Set 
\[
\Pi(X) = \bigcup_{g \in \scrR(X)}i\Omega^+_g(X).
\]
We think of $\Pi(X)$ as a vector bundle over the Frechet manifold $\scrR(X)$, and then take a fiberwise completion with respect to a suitable Sobolev norm.
Let us use the same notation $\Pi(X) \to \scrR(X)$ also for the Hilbert bundle obtained by this completion.
Let $\circPi(X)$ be the subset of $\Pi(X)$ consisting of perturbations $\mu$ such that:
\begin{itemize}
    \item $\|\mu\|\leq1$ for the Sobolev norm on $\Omega^+_g(X)$, and
    \item there is no reducible solution for $\mu$.
\end{itemize}
 The space $\circPi(X)$ is $(b^+(X)-2)$
-connected, and $\circPi(X)$ is invariant under the fiberwise $(-1)$-multiplication on the Hilbert bundle $\Pi(X) \to \scrR(X)$.

What makes the construction of the half-total Seiberg--Witten characteristic class complicated is the fact that the charge conjugation acts on the space of perturbations non-trivially; the action is given as (fiberwise) multiplication by $-1$.
Because of this, to implement a construction equivariantly under the charge conjugation, we need a `multi-valued perturbation' when we form a collection of moduli spaces over a set of spin$^c$ structures, not just a copy of a common families self-dual 2-form.
This is formulated as follows.

Let $\varpi : \Spinc(X,k) \to \Spinc(X,k)/\Conj$ be the quotient map.
Define $\tilde{\calS} := \varpi^{-1}(\calS) \subset \Spinc(X,k)$.
Since $\calS$ is invariant under the $\mathrm{Diff}^+(X)$-action, $\tilde{\calS}$ is also $\mathrm{Diff}^+(X)$-invariant.
Define 
\[
\circPi(X,\calS)' := (\tilde{\calS} \times \circPi(X))/(\Z/2),
\]
where $\Z/2$ acts on $\tilde{\calS}$ via the charge conjugation and on $\circPi(X)$ via the (fiberwise) $(-1)$-multiplication.
(To make our notation consistent with that in \cite{konno2022homological}, let us use the notation $\circPi(X,\calS)'$ with prime, not like $\circPi(X,\calS)$. This remark applies throughout this \lcnamecref{section Characteristic classes from Seiberg--Witten theory}.)

Now we consider a family of $4$-manifolds.
Let $X \to E \to B$ be a fiber bundle with structure group $\mathrm{Diff}^+(X)$ over a CW complex $B$.
For $b \in B$, we denote by $E_b$ the fiber of $E$ over $b$.
Associated with $E$, we have several natural fiber bundles.
For instance, since $\mathrm{Diff}^+(X)$ acts on $\calS$ via pull-back, we obtain an associated fiber bundle over $B$ with fiber $\calS$.
We denote it by
\[
\calS
\to \calS(E)
\to B.
\]
Similarly, we get a fiber bundle with fiber $\circPi(X,\calS)'$, denoted by 
\[
\circPi(X,\calS)'
\to \circPi(E,\calS)'
\to B.
\]
This has underlying families of spaces of metrics, denoted by 
\[
\scrR(X) \to \scrR(E) \to B.
\]
A section of $\scrR(E) \to B$ is a fiberwise metric on $E$.
Note that the forgetful map $\circPi(X,\calS)' \to \calS$ induces a surjection
\[
\circPi(E,\calS)' \to \calS(E),
\]
which commutes with the projections onto $B$.

It could be worth unpackaging the data $\circPi(E,\calS)'$.
Let $\vec{\mu}$ be a point in $\circPi(E,\calS)'$.
Let $b \in B$ and $g \in \scrR(E_b)$ be the images of $\vec{\mu}$ under the projections $\circPi(E,\calS)' \to B$ and $\circPi(E,\calS)' \to \scrR(E)$.
Let $\calS(E)_b$ be the fiber of $\calS(E) \to B$ over $b$.
Picking a representative $\fraks$ of each coset  $[\fraks] \in \calS(E)_b$, we can express
$\vec{\mu}$ as a collection of a self-dual 2-forms $\{\mu_{\fraks} \in \Omega^+_{g}(E_b)\}_{[\fraks] \in \calS(E)_b}$.
We set
\[
\calM(E_b, \calS, \vec{\mu})
= \bigsqcup_{[\fraks] \in \calS(E)_b} \calM(E_b, \fraks, g, \mu_{\fraks}).
\]
If all $\mu_{[\fraks]}$ are generic,  $\calM(E_b, \calS, \vec{\mu})$ is a smooth manifold. 
Further, as an unoriented manifold, $\calM(E_b, \calS, \vec{\mu})$ is independent of choice of representatives $\fraks$ of $[\fraks]$.
Indeed, if we choose the other representative $\bar{\fraks}$ of $[\fraks]$, the chosen perturbation becomes $\mu_{\bar{\fraks}} = -\mu_{\fraks}$, so we can use the diffeomorphism \eqref{eq: conj symm}.

Now let us take a fiberwise metric $\tilde{g} : B \to \scrR(E)$, and pick a section $\sigma' : \calS \to \circPi(E,\calS)'$ that makes the following diagram commutative:
\begin{align*}
 \begin{CD}
     \calS(E) @>{\sigma'}>> \circPi(E,\calS)' \\
  @VVV    @VVV \\
     B   @>{\tilde{g}}>>  \scrR(E).
  \end{CD}    
\end{align*}
Define the {\it half-total moduli space for $\sigma'$} by
\[
\calM_{\mathrm{half}}(E, \calS, \sigma')
= \bigcup_{b \in B} \calM(E_b, \calS, \sigma'(b)).
\]
(This was denoted by $\calM_{\sigma',\mathrm{half}}$ in \cite[Definition 2.11]{konno2022homological}, but let us use the notation $\calM_{\mathrm{half}}(E,\calS, \sigma')$ to keep track of $E$ and $\calS$.)
If $B$ is a comapct manifold, by choosing generic $\sigma'$,  $\calM_{\mathrm{half}}(E, \calS, \sigma')$ becomes a compact manifold too ({\it cf.}~\cref{lem: finiteness}), and
the dimension of $\calM_{\mathrm{half}}(E, \calS, \sigma')$ is given by $\dim{B}-k$.
In particular, for $\dim{B}=k$, we can define a $\Z/2$-valued invariant by counting the zero dimensional compact manifold $\calM_{\mathrm{half}}(E, \calS, \sigma')$.

For a general case where $B$ is neither compact nor a manifold, we define a cochain
\[
\SWcalhalftot^k(E, \calS, \sigma') \in C^k(B)
\]
as follows, where $C^k(B)$ denotes the $\Z/2$-coefficient cellular cochain group.
Loosely speaking, for each $k$-cell $e$ of $B$ with a characteristic map $\varphi_e : D^k \to B$, we define
\[
\SWcalhalftot^k(E, \calS, \sigma')(e)
= \#\calM_{\mathrm{half}}(\varphi_e^\ast E, \calS, \varphi_e^\ast \sigma') \in \Z/2.
\]
Here the right-hand side is a finite sum ({\it cf.}~\cref{lem: finiteness}), and we can justify the necessary transversality by using a virtual neighborhood technique, just as in \cite{K21,konno2022homological}.
Using the assumption that $b^+(X) \geq k+2$, we can prove that $\SWcalhalftot^k(E, \calS, \sigma')$ is a cocycle, and that the cohomology class
\[
\SWbbhalftot^k(E, \calS)
:= [\SWcalhalftot^k(E, \calS, \sigma')] 
\in H^k(B;\Z/2)
\]
is independent of the choice of $\sigma'$ (\cite[Propositions 2.22, 2.23]{konno2022homological}).
We set 
\[
\SWbbhalftot^k(X, \calS)
:=
\SWbbhalftot^k(E\mathrm{Diff}^+(X), \calS)
\in H^k(B\mathrm{Diff}^+(X);\Z/2).
\]

\subsection{Finiteness}

Here we record some finiteness result, which was used in \cref{subsection Construction of the characteristic classes} and is necessary in a subsequent argument too.

First, let us recall the following well-known finiteness of Seiberg--Witten moduli spaces (see, e.g., \cite{Mo96}). Fix a metric $g$ and $k \in \Z$.
Then there are only finitely many spin$^c$ structures $\fraks$ with $d(\fraks)=k$ for which the moduli space $\calM(X,\fraks,g,\mu)$ for the perturbed equations \eqref{eq: SW eq} are non-empty for some $\mu \in \Omega^+_g(X)$ with $\|\mu\| \leq 1$.
Here $\|-\|$ denotes a suitable Sobolev norm.
Moreover, for a fixed pair $(g,\mu)$, the moduli space $\calM(X,\fraks,g,\mu)$ is compact.
A families generalization of this fact in our context is as follows.

As in \cref{subsection Construction of the characteristic classes}, fix $k\geq0$, and let $X$ be a smooth closed oriented $4$-manifold with $b^+(X) \geq k+2$, and let $X \to E \to B$ be a fiber bundle with structure group $\mathrm{Diff}^+(X)$ over a CW complex $B$.

\begin{lem}
\label{lem: finiteness}
Suppose that $B$ is compact.
If we pick a section $\sigma'$ as in \cref{subsection Construction of the characteristic classes}, then the half-total moduli space 
\[
\calM_{\mathrm{half}}(E,\Spinc(X,k)/\Conj,\sigma')
\]
is compact.
\end{lem}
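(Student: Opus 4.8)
The statement is a families version of the classical compactness/finiteness fact for the Seiberg--Witten equations, so the plan is to reduce to that classical fact fiberwise and then use compactness of $B$ to globalize. First I would unpack what $\calM_{\mathrm{half}}(E,\Spinc(X,k)/\Conj,\sigma')$ is: over each $b \in B$ it is the disjoint union $\bigsqcup_{[\fraks] \in \Spinc(X,k)/\Conj(E)_b} \calM(E_b,\fraks,g_b,\mu_{\fraks})$, where $g_b = \tilde g(b)$ and the $\mu_{\fraks}$ come from $\sigma'(b)$, and the union over all $b$ assembles into the total space. The key point is to bound, uniformly in $b$, both (a) the set of isomorphism classes $[\fraks]$ with $d(\fraks) = -k$ for which $\calM(E_b,\fraks,g_b,\mu_{\fraks}) \neq \emptyset$, and (b) the solutions themselves (a priori estimates). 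Both are standard for a single $(g,\mu)$ with $\|\mu\|\le 1$; the work is to make the bounds locally uniform and invoke compactness of $B$.

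For (a): for a fixed metric $g$ and perturbation $\mu$ with $\|\mu\|\le 1$, the Weitzenböck/curvature estimate forces $c_1(\fraks)^2 \ge -(\text{const depending on } g)$ whenever the moduli space is nonempty, and combined with the formal dimension constraint $d(\fraks) = -k$ (which fixes $c_1(\fraks)^2 = 2\chi(X)+3\sigma(X)-4k$) this means only finitely many $\spinc$ classes can contribute. One must check this bound varies continuously with $(g,\mu)$: pick, for each $b \in B$, an open neighborhood $U_b$ over which the fiberwise metric and the finitely many relevant forms $\mu_{\fraks}$ vary within a compact range, so the curvature bound — hence the finite list of contributing $[\fraks]$ — can be taken constant on $U_b$; extract a finite subcover by compactness of $B$, giving a single finite set $\calS_0 \subset \Spinc(X,k)/\Conj$ of $\spinc$ classes that can contribute anywhere. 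This reduces the half-total moduli space to a \emph{finite} union of $\calM(E_b,\fraks,g_b,\mu_{\fraks})$ over $[\fraks] \in \calS_0$, each of which is a genuine (families) SW moduli space.

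For (b): with $[\fraks]$ ranging over the finite set $\calS_0$, one needs the standard a priori $C^0$ (hence, by elliptic bootstrapping, $C^\infty$) bounds on $(A,\Phi)$, again locally uniform in $b$, so that the total space over $B$ of each $\calM(E_b,\fraks,g_b,\mu_{\fraks})$ is contained in a fixed bounded region of the relevant configuration space bundle; then Rellich compactness plus the usual elliptic regularity argument (exactly as in the non-families proof, e.g.\ \cite{Mo96}) shows that a sequence of solutions over a sequence $b_n \to b$ in $B$ has a convergent subsequence. Since $B$ is compact, there is no bubbling or escape to infinity in the base either, and the total space is sequentially compact; being metrizable, it is compact. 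The main obstacle is purely bookkeeping: making all the a priori bounds from the single-fiber theory \emph{locally uniform over $B$} and patching them via finitely many charts — there is no new analytic phenomenon, and the argument is the one already used implicitly in \cite{K21,konno2022homological}, so I would present it briefly and refer there for the parts that are verbatim the same.
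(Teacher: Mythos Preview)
Your proposal is correct and follows the same approach as the paper: both arguments rest on the fact that the perturbations in $\circPi(X)$ satisfy $\|\mu\|\le 1$, which is exactly what makes the standard Seiberg--Witten compactness argument go through uniformly over a compact base. The paper's proof is a single sentence pointing to this bounded-perturbation condition, whereas you have written out the underlying standard argument in detail; one minor imprecision in your sketch is that in step~(a) the formal-dimension constraint already fixes $c_1(\fraks)^2$, so what actually gives finiteness is the curvature bound on $\|F_A^+\|$ (hence on the harmonic norm of $c_1(\fraks)$), not a lower bound on $c_1(\fraks)^2$---but this is a well-known refinement and does not affect the validity of your plan.
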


\begin{proof}
This follows from that we used perturbations with $\|\mu\| \leq 1$ in the definition of $\circPi(X)$.
\end{proof}

For our purpose, an important case is that $\calS$ is an orbit of the action of $\mathrm{Diff}^+(X)$ on $\Spinc(X,k)/\Conj$.
Set
\[
\Spinc(X,k)^{\vee} := \{ \fraks \in \Spinc(X,k) \mid \fraks \not\cong \bar{\fraks} \}
\]
and let $\mathbb{S}(X,k)$ denote the orbit space for the $\mathrm{Diff}^+(X)$-action on $\Spinc(X,k)^{\vee}/\Conj$,
\[
\mathbb{S}(X,k) = (\Spinc(X,k)^{\vee}/\Conj)/\mathrm{Diff}^+(X).
\]
As an analog of the notion of a basic class, we call $\calS \in \bbS(X,k)$ a {\it basic orbit of $E$} if $\SWbbhalftot^k(E,\calS) \neq 0$.
Let $\Bhalf(E,k)$ denote the set of basic orbits:
\[
\Bhalf(E,k)
= \{\calS \in \bbS(X,k) \mid \SWbbhalftot^k(E,\calS) \neq 0\}.
\]
Then we have:

\begin{lem}
\label{cor finiteness basic}
Suppose that $B$ is compact.
Then $\Bhalf(E,k)$ is a finite set. 
\end{lem}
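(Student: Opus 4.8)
The plan is to deduce finiteness of $\Bhalf(E,k)$ from the compactness statement in \cref{lem: finiteness} together with the definition of $\SWbbhalftot^k(E,\calS)$ via counting the half-total moduli space. First I would observe that $\bigsqcup_{\calS \in \bbS(X,k)}\calS = \Spinc(X,k)/\Conj$, so the various half-total moduli spaces $\calM_{\mathrm{half}}(E,\calS,\sigma')$, for a single generic section $\sigma'$ defined over all of $\Spinc(X,k)/\Conj$, are disjoint pieces of $\calM_{\mathrm{half}}(E,\Spinc(X,k)/\Conj,\sigma')$. Indeed, restricting a generic section $\sigma'$ for $\Spinc(X,k)/\Conj$ to the $\Diff^+(X)$-invariant subset $\widetilde{\calS} = \varpi^{-1}(\calS)$ gives an admissible section computing $\SWbbhalftot^k(E,\calS)$, and the associated moduli spaces assemble to a \emph{disjoint} decomposition
\[
\calM_{\mathrm{half}}(E,\Spinc(X,k)/\Conj,\sigma')
= \bigsqcup_{\calS \in \bbS(X,k)} \calM_{\mathrm{half}}(E,\calS,\sigma').
\]

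Next, since $B$ is compact, \cref{lem: finiteness} says the left-hand side is a compact manifold; I would reduce to the case $\dim B = k$ (replacing $B$ by the $k$-skeleton, or working cell-by-cell as in \cref{subsection Construction of the characteristic classes}, the relevant moduli spaces live over the $k$-cells only), so that $\calM_{\mathrm{half}}(E,\Spinc(X,k)/\Conj,\sigma')$ is a compact $0$-manifold, hence a finite set of points. A disjoint decomposition of a finite set into pieces can have only finitely many nonempty pieces. Therefore $\calM_{\mathrm{half}}(E,\calS,\sigma')$ is empty for all but finitely many $\calS \in \bbS(X,k)$, and for those $\calS$ the count defining $\SWbbhalftot^k(E,\calS)$ vanishes, so $\calS \notin \Bhalf(E,k)$. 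This shows $\Bhalf(E,k)$ is finite.

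The only genuinely delicate point is making the "disjoint decomposition" rigorous at the level of the section $\sigma'$: the section computing $\SWbbhalftot^k(E,\calS)$ is a section $\calS(E) \to \circPi(E,\calS)'$, and one must check that a single generic section over $(\Spinc(X,k)/\Conj)(E)$ restricts to a section transverse enough to compute each $\SWbbhalftot^k(E,\calS)$ simultaneously — equivalently, that genericity can be arranged compatibly across all orbits at once. This is routine: the space of sections over the larger base surjects onto, and is "denser than", the relevant spaces of sections over the sub-bundles, so a Baire-category / parametrized-transversality argument (of the same type already used in \cite{konno2022homological}) produces one $\sigma'$ that works for every $\calS$. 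Once that is in hand, the counting argument above is immediate, and I would not expect any further obstacle. Alternatively, one can bypass sections entirely: for any fixed generic $(\tilde g, \vec\mu)$ over the $k$-cells, only finitely many spin$^c$ structures of formal dimension $-k$ contribute nonempty fibers (the families analog of the classical finiteness of basic classes recalled just before the lemma), and these meet only finitely many orbits $\calS$.
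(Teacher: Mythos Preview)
Your proposal is correct and follows essentially the same route as the paper. The paper's proof is precisely the terser alternative you sketch at the end: fix one section $\sigma'$, invoke \cref{lem: finiteness} to conclude that only finitely many $\fraks \in \Spinc(X,k)$ have nonempty fiber moduli space anywhere over $B$, and observe that $\#\Bhalf(E,k)$ is bounded above by the number of such $\fraks$.
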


\begin{proof}
Fix a section $\sigma'$.
\cref{lem: finiteness} implies that there are only finitely many $\fraks \in \Spinc(X,k)$ such that there is $b \in B$ with $\calM(E_b, \fraks, g_b, \sigma'(b)) \neq \emptyset$, where $g_b$ is the underlying metric of $\sigma'(b)$ on $E_b$.
Since $\#\Bhalf(E,k)$ is bounded above by the number of such $\fraks$, the assertion follows.   
\end{proof}

\section{Computing the invariant}
\label{section Computing the invariant}

In this \lcnamecref{section Computing the invariant},
we prove \cref{thm: generating homoogy} by evaluating the Seiberg--Witten characteristic classes $\SWbbhalftot^k(X,\calS)$ introduced in \cref{section Characteristic classes from Seiberg--Witten theory} at homology classes $\alpha_i$ defined in \eqref{eq: alpha}.

Precisely, we shall consider the homomorphism
\begin{align*}
\bigoplus_{\calS \in \bbS(X,k)}
\langle \SWbbhalftot^k(X,\calS), - \rangle :
H_k(B\mathrm{Diff}^+(X);\Z) \to \bigoplus_{\calS \in \bbS(X,k)} \Z/2.
\end{align*}
We shall show that this homomorphism has infinitely generated image in $\bigoplus_{\calS \in \bbS(X,k)} \Z/2$ for $4$-manifolds $X$ considered in \cref{thm: generating homoogy}.

\subsection{Reducing to the monodromy invariant part}

The Seiberg--Witten characteristic class $\SWbbhalftot^k(X, \calS)$ involves spin$^c$ structures that are not invariant under the monodromies of the families that we consider.
Adapting an argument in \cite[Section~3.1]{konno2022homological} to our setup, we shall see that such spin$^c$ structures do not contribute to the final computation.

To describe it, let us recall the numerical families Seiberg--Witten invariant.
Let $B$ be a closed smooth manifold of dimension $k \geq 0$, $X$ be a smooth oriented closed $4$-manifold of $b^+(X) \geq k+2$, and $X \to E \to B$ be a fiber bundle with structure group $\mathrm{Diff}^+(X)$ over $B$. 
Given a spin$^c$ structure $\fraks$ on $X$ of formal dimension $-k$,
suppose that the monodromy of $E$ fixes the isomorphism class of $\fraks$.
Then the numerical families Seiberg--Witten invariant 
\[
SW(E,\fraks) \in \Z/2
\]
can be defined.
If the structure of $E$ lifts to the automorphism group of the spin$^c$ $4$-manifold $(X,\fraks)$, this is the invariant defined by Li--Liu~\cite{LiLiu01}.
However, even if $E$ does not admit such a lift, one can still define $SW(E,\fraks)$ \cite{K21,BK20gluing}.

Pick an orbit $\calS \in \bbS(X,k)$.
We regard $\calS$ also as a subset of $\Spinc(X,k)/\Conj$.
Let $\tau : \Spinc(X,k)/\Conj \to \Spinc(X,k)$ be a section of the quotient map $\Spinc(X,k) \to \Spinc(X,k)/\Conj$.
For mutually commuting diffeomorphisms $f_1, \ldots, f_k$ of $X$, we denote by $X_{f_1, \ldots, f_k} \to T^k$ the multiple mapping torus of $f_1, \ldots, f_k$. 

\begin{pro}[{\it cf.} {\cite[Corollary~3.4]{konno2022homological}}]
\label{prop: reducing to monodromy invariant part}
Let $f_1, \dots, f_k : X \to X$ be mutually commuting orientation-preserving diffeomorphisms.
Suppose that they satisfy the following conditions:
\begin{itemize}
\item[(i)] For each $i = 1, \ldots, k$, $f_i$ preserves $\tau(\calS)$ setwise.
\item[(ii)] For each $i$, there exists a smooth isotopy $(F_i^t)_{t \in [0,1]}$ from $f_i^2$ to $\id_X$.
For $i\neq j$, $F_i^t$ commutes with $f_j$ for any $t \in [0,1]$.
\end{itemize}
Then we have
\[
\langle
\SWbbhalftot^k(X_{f_1, \dots, f_k}, \calS), [T^k]
\rangle
= \sum_{\substack{\fraks \in \tau(\calS), \\ f_i^\ast \fraks = \fraks\ (1 \leq i \leq k)}}
SW(X_{f_1, \dots, f_k},\fraks)
\]
in $\Z/2$.
\end{pro}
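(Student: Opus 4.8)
The plan is to show that the spin$^c$ structures $\fraks \in \tau(\calS)$ which are \emph{not} fixed by the full monodromy of the multiple mapping torus $X_{f_1,\dots,f_k}$ contribute zero to $\langle \SWbbhalftot^k(X_{f_1,\dots,f_k},\calS),[T^k]\rangle$, so that only the monodromy-invariant summands survive and reproduce the ordinary families Seiberg--Witten invariants. First I would set $E = X_{f_1,\dots,f_k} \to T^k$ and compute $\SWcalhalftot^k(E,\calS,\sigma')$ against the fundamental cell of $T^k$ using a convenient CW structure on $T^k$ with a single top cell $e$ with characteristic map $\varphi_e : D^k \to T^k$. By definition the pairing equals $\#\calM_{\mathrm{half}}(\varphi_e^\ast E,\calS,\varphi_e^\ast\sigma') \bmod 2$, which decomposes as a sum over cosets $[\fraks] \in \calS$ of the (perturbed, transverse) moduli-space counts. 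Choosing a representative $\fraks \in \tau(\calS)$ for each coset, this is $\sum_{[\fraks]\in\calS} \#\calM_{\mathrm{half}}(\dots,\fraks,\dots) \bmod 2$, and I would organize the sum according to the orbit of $\fraks$ under the subgroup $\Gamma \subset \Diff^+(X)$ generated by $f_1,\dots,f_k$ acting on $\tau(\calS)$.

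The key point is a \textbf{cancellation-in-pairs} argument. Condition (i) guarantees $\Gamma$ acts on $\tau(\calS)$, so $\tau(\calS)$ breaks into $\Gamma$-orbits. For a $\fraks \in \tau(\calS)$ with non-trivial $\Gamma$-orbit, the diffeomorphisms $f_i$ identify the moduli space $\calM(E_b,\fraks,g,\mu_\fraks)$ over a point $b$ with the moduli space of $f_i^\ast\fraks$ over the translated point; tracing this around the mapping-torus directions, the total count over the $\Gamma$-orbit of $\fraks$ is a multiple of the orbit size. The subtlety — and here condition (ii) enters — is that the isotopies $(F_i^t)$ from $f_i^2$ to $\id_X$ let one build the bundle $E$ up from the $f_i$'s in a way compatible with a homotopy to a bundle whose structure reduces along each $S^1$-factor to a $\Z/2$ rather than a $\Z$; this is exactly the mechanism in \cite[Section~3.1]{konno2022homological} that turns ``orbit size divides the count'' into ``orbit size $2$ $\Rightarrow$ the contribution is even''. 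So I would argue: pick the fiberwise metric and the multi-section $\sigma'$ to be $\Gamma$-equivariant (possible by averaging/choosing on a fundamental domain and propagating via the $f_i$ and the $F_i^t$), so that over the $\Gamma$-orbit of a non-invariant $\fraks$ the contributions to the count come in matched pairs interchanged by some $f_i$ (using that $f_i^2$ is isotopic to the identity, so the involution $f_i$ acts freely on the non-invariant part of $\tau(\calS)$ up to the $\Z/2$-reduction), hence cancel mod $2$. What remains is the sum over $\fraks$ fixed by every $f_i$, i.e. by all of $\Gamma$; for each such $\fraks$ the half-total moduli space restricted to that summand is precisely the moduli space defining $SW(X_{f_1,\dots,f_k},\fraks)$, using the charge-conjugation identification to see the count is independent of the choice of representative $\fraks$ versus $\bar\fraks$ (this is where passing from $\calS \subset \Spinc(X,k)/\Conj$ back to $\tau(\calS) \subset \Spinc(X,k)$ is harmless — either $\fraks$ and $\bar\fraks$ are both monodromy-invariant and contribute equally, or neither is and both lie in the cancelling part).

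The main obstacle I anticipate is making the pairing/cancellation rigorous at the level of \emph{virtual} moduli spaces: since transversality is achieved via a virtual neighborhood rather than honest regularity, I must arrange the virtual data (obstruction bundles, cutoff functions, etc.) to be invariant under the relevant $\Gamma$-action, and check that the mod-$2$ counts are computed $\Gamma$-equivariantly so that the free $\Z/2$-action on the non-invariant spin$^c$ structures genuinely pairs up contributions. This is precisely the technical content carried out in \cite[Section~3.1, proof of Corollary~3.4]{konno2022homological}, and my proof would consist of verifying that nothing in that argument used more than hypotheses (i) and (ii): condition (i) is what lets $\Gamma$ act on the index set $\tau(\calS)$ at all, and condition (ii) — the isotopies $F_i^t$ with the stated commutation — is exactly what is needed so that $f_i$ descends to a \emph{free} involution on the non-monodromy-invariant spin$^c$ structures after the $\Z/2$-quotient, which drives the cancellation. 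Once the equivariant virtual count is set up, the restriction of $\SWcalhalftot^k$ to the monodromy-invariant summands is identified with $\sum SW(X_{f_1,\dots,f_k},\fraks)$ essentially by definition, completing the proof.
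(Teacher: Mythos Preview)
Your proposal is correct and follows essentially the same approach as the paper: both organize the half-total count as a sum over $\tau(\calS)$, use condition~(ii) to obtain a $(\Z/2)^k$-action on $\tau(\calS)$ generated by $f_1,\dots,f_k$, argue that non-trivial orbits contribute zero mod~$2$ by a cancellation-in-pairs mechanism (the paper invokes the modified \cite[Lemma~3.3]{konno2022homological} for exactly this), and identify the surviving monodromy-invariant summands with the families invariants $SW(X_{f_1,\dots,f_k},\fraks)$. The paper's own proof is in fact terser than yours---it simply says to repeat \cite[Corollary~3.4]{konno2022homological} with $\Spinc(X,k)/\Conj$ replaced by $\calS$---so your more explicit outline of the equivariant-perturbation and virtual-neighborhood issues is a faithful unpacking of what that citation contains.
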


\begin{proof}
The proof is obtained by repeating the proof of \cite[Corollary~3.4]{konno2022homological} with replacing $\Spinc(X,k)/\Conj$ with $\calS$.
We just give a slight comment on how to do the modification.

If the actions of all $f_i$ on $\tau(\calS)$ are trivial, there is nothing to prove.
To treat the other case,
first note that we have a modification 
of \cite[Lemma~3.3]{konno2022homological} obtained by replacing $\Spinc(X,k)/\Conj$ with $\calS$.
Let us consider a $(\Z/2)^k$-action on $\tau(\calS)$ generated by $f_1, \dots, f_k$.
For $\fraks \in \tau(\calS)$, if there is $i$ with $f_i^\ast\fraks\neq\fraks$, we may use the modified \cite[Lemma~3.3]{konno2022homological} to conclude that the sum of the counts of the moduli spaces for the $(\Z/2)^k$-orbit of $\fraks$ is zero over $\Z/2$.
Thus, in any case, $\langle
\SWbbhalftot^k(X_{f_1, \dots, f_k}, \calS), [T^k]
\rangle$ is computed from the counts of the moduli spaces only for the monodromy invariant spin$^c$ structures, and it ends up with the assertion of \cref{prop: reducing to monodromy invariant part}.
\end{proof}

\subsection{Gluing result}
\label{gluing section}

Another thing we need is a gluing result proven by Baraglia and the author  \cite{BK20gluing}.
We recall the statement for readers' convenience.
In general, let $Z$ be an oriented smooth closed $4$-manifold, and $Z \to E \to B$ be an oriented smooth fiber bundle with fiber $Z$.
Then we get an associated vector bundle 
\[
\R^{b^+(Z)} \to H^+(E) \to B,
\]
by considering maximal-dimensional positive-definite subspaces of the second cohomology fiberwise.
The isomorphism class of $H^+(E)$ is determined only by $E$.

The gluing result we need is formulated as follows.
Let $k>0$, and let $M$, $N$ be closed oriented smooth $4$-manifolds with $b^+(M) \geq 2$ and $b^+(N)=k$, and with $b_1(M)=b_1(N)=0$.
Set $X=M\#N$.
Let $\frakt \in \Spinc(M,0)$ and $\frakt' \in \Spinc(N,k+1)$.
Then we have $d(\frakt\#\frakt')=-k$.
Let $B$ be a closed smooth manifold of dimension $k$, and $M \to E_M \to B$ and $N \to E_N \to B$ be oriented smooth fiber bundles.
Fix sections $\iota_M : B \to E_M$, $\iota_N : B \to E_N$ whose normal bundles are isomorphic via a fiberwise orientation-reversing isomorphism, so that we can form the fiberwise connected sum $X \to E_X \to B$ of $E_M$ and $E_N$ along $\iota_M, \iota_N$.
Then we have:

\begin{thm}[{\cite[Theorem 1.1]{BK20gluing}}]
\label{gluing BK}
If $w_{b^+(N)}(H^+(E_N)) \neq 0$, then we have
\[
SW(E_X,\frakt\#\frakt') = SW(M,\frakt)
\]
in $\Z/2$.
\end{thm}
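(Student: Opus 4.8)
The plan is to compute $SW(E_X,\frakt\#\frakt')$ by stretching a long neck along the connected-sum $3$-sphere and then invoking a families gluing argument, which reduces the count of the family moduli space over $B$ (zero-dimensional, since $d(\frakt\#\frakt')=-k=-\dim B$) to a product of a count on the $M$-side and a count on the $N$-side. I would work mod $2$ throughout and, exactly as in the definition of the family invariant $SW(E,\fraks)$ recalled above, use the virtual neighbourhood technique to arrange the necessary transversality in families; in particular I may take the fiberwise metrics and perturbations to be generic. Concretely, replace the fiberwise connected-sum region by $S^3\times[-T,T]$ and study solutions of the family Seiberg--Witten equations on $E_X$ as $T\to\infty$. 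Since $S^3$ admits a positive-scalar-curvature metric with a unique reducible, the standard compactness theory for the equations on cylindrical-end $4$-manifolds shows that, after gauge transformations and passing to a subsequence, a sequence of solutions over $b\in B$ converges to a pair: a finite-energy solution on the $M$-piece and a finite-energy solution on the $N$-piece, both asymptotic to the reducible on the neck, with no energy escaping into the neck. Hence in the limit the $M$-piece gives a point of the fiber of $\calM(E_M,\frakt)$ over $b$ and the $N$-piece a point of the fiber of $\calM(E_N,\frakt')$ over $b$.

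Next I would analyse the two sides. On the $M$-side $d(\frakt)=0$, so for generic data the fiberwise moduli space is a finite set of irreducible, unobstructed points and $\#\calM(M,\frakt)\equiv SW(M,\frakt)\bmod 2$. On the $N$-side $d(\frakt')=-(k+1)$ while $b^+(N)=k=\dim B$; the constraint $d(\frakt')=-(k+1)$ forces $c_1(\frakt')^2=\sigma(N)$, hence the Dirac operator at a reducible has $\ind_\C=\tfrac18(c_1(\frakt')^2-\sigma(N))=0$. A dimension count then shows that for generic data the irreducible part of $\calM(E_N,\frakt')$ is empty, while a reducible solution occurs over $b$ precisely when the harmonic self-dual projection of the perturbation cancels that of $2\pi i\,c_1(\frakt')$, i.e.\ exactly over the zero set of a section of the rank-$b^+(N)$ bundle $H^+(E_N)\to B$, these reducibles being isolated and spinorially unobstructed. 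So the reducibles sit over a finite set $W\subset B$ with
\[
\#W\;\equiv\;\langle w_{b^+(N)}(H^+(E_N)),[B]\rangle\;\bmod 2,
\]
which equals $1$ by hypothesis.

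The remaining, and main, step is the families gluing. For $T\gg 0$ I would construct a gluing map from the space of broken configurations (carrying an extra circle of gluing parameters) onto a neighbourhood of the broken locus in $\calM(E_X,\frakt\#\frakt')$ and show it is a bijection. Every broken configuration lies over some $b\in W$ and consists of an irreducible solution on the $M$-piece glued to the reducible on the $N$-piece; the residual $U(1)$-stabiliser of the reducible acts freely on the circle of gluing parameters (while the $M$-piece has trivial stabiliser), so that circle rigidifies to a single glued solution, and the gluing obstruction vanishes because, once $W$ is defined via the vanishing of the $H^+(E_N)$-section, both pieces are unobstructed. Counting, this yields
\[
\#\calM(E_X,\frakt\#\frakt')\;\equiv\;\#W\cdot\#\calM(M,\frakt)\;\equiv\;SW(M,\frakt)\;\bmod 2,
\]
which is the asserted equality.

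The hard part is the families gluing theorem itself: constructing the gluing map with transversality in families and, in particular, analysing the gluing across the neck where the limiting configuration is reducible — this is precisely what produces the obstruction data $H^+(E_N)$ together with the circle of gluing parameters — as well as excluding the more degenerate limits (energy escaping into the neck, spinor bubbling, or a reducible degenerating on the $M$-side, all ruled out for generic data since $d(\frakt)=0$). An alternative that avoids the neck analysis is to pass to a finite-dimensional $S^1$-equivariant (Bauer--Furuta) approximation in families and deduce the formula from multiplicativity of the family Bauer--Furuta invariant under connected sum, combined with an identification of the $N$-side stable map with the class $w_{b^+(N)}(H^+(E_N))$; this trades the gluing analysis for equivariant stable-homotopy bookkeeping but may make the transversality cleaner.
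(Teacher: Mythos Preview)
The paper does not contain a proof of this statement: it is quoted from \cite{BK20gluing} as an external input (``We recall the statement for readers' convenience''), so there is no in-paper argument to compare your proposal against.

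That said, your sketch is a reasonable outline of how such a families gluing formula is established, and the bookkeeping is correct. In particular, $d(\frakt')=-(k+1)$ together with $b_1(N)=0$ and $b^+(N)=k$ forces $\ind_{\C} D=0$, so for generic fiberwise data the $N$-side carries no irreducibles and the reducibles are spinorially unobstructed; the identification of the reducible locus over $B$ with the zero set of a generic section of $H^+(E_N)$ is exactly the mechanism that produces $w_{b^+(N)}(H^+(E_N))$; and the absorption of the gluing circle by the $U(1)$-stabiliser of the reducible is the standard reason the glued count becomes a product. As you yourself flag, the substantive analysis---families gluing across a reducible limit with control of the obstruction bundle, together with the exclusion of other degenerations---is precisely what \cite{BK20gluing} supplies. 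Your proposal is a faithful high-level summary of that strategy rather than an independent proof, and your alternative via the families Bauer--Furuta invariant is also a viable route to the same formula.
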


Now we apply \cref{gluing BK} to the multiple mapping torus $E_i \to T^k$ constructed in \cref{subsectionFamilies over the torus} for $i \geq 1$.
For each $j=1, \dots, k$,
recall that $f_j$ acts on the $j$-th copy of $H^+(S^2 \times S^2) \subset H^2(S^2\times S^2)$ via multiplication by $-1$.
we can see that the vector bundle 
\[
H^+((kS^2\times S^2)_{f_1, \dots, f_k}) \to T^k
\]
associated to the multiple mapping torus $(kS^2\times S^2)_{f_1, \dots, f_k} \to T^k$ satisfies 
\begin{align}
\label{eq: Stiefel-Whitney}
w_{k}(H^+((kS^2\times S^2)_{f_1, \dots, f_k})) \neq 0.    
\end{align}
Let $\fraks_S$ denote the unique spin structure on $kS^2\times S^2$.
Then we have $\fraks_S \in \Spinc(kS^2\times S^2,k+1)$.

\begin{lem}
\label{lem: conseq gluing}
Let $\frakt \in \Spinc(M_i,0)$.
Then we have
\[
SW(E_i, \frakt \# \fraks_S)  = SW(M_i,\frakt)
\]
in $\Z/2$.    
\end{lem}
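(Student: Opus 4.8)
The plan is to apply the gluing theorem, \cref{gluing BK}, to the decomposition $X = M_i \# (kS^2\times S^2)$ and the fiber bundle $E_i \to T^k$, recognizing $E_i$ as a fiberwise connected sum. First I would set $M = M_i$ and $N = kS^2\times S^2$ in the notation of \cref{gluing BK}, so that $b^+(M_i)\geq 2$ and $b^+(kS^2\times S^2)=k$, and $b_1(M_i)=b_1(kS^2\times S^2)=0$ since all pieces are simply connected. The bundle $E_i \to T^k$ is, by construction in \cref{subsectionFamilies over the torus}, the multiple mapping torus of the commuting diffeomorphisms $f_1,\dots,f_k$ of $M_i\#kS^2\times S^2$, each supported in one of the $S^2\times S^2$ summands (away from a disk). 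Since each $f_j$ is the identity on $M_i$ and on all but the $j$-th $S^2\times S^2$ factor, the bundle $E_i$ decomposes canonically as the fiberwise connected sum of the trivial bundle $T^k \times M_i \to T^k$ with the multiple mapping torus $(kS^2\times S^2)_{f_1,\dots,f_k}\to T^k$, glued along a trivialized section coming from the fixed disk $D^4$. This identifies $E_i = E_X$ with $E_M = T^k \times M_i$ and $E_N = (kS^2\times S^2)_{f_1,\dots,f_k}$ in the hypotheses of \cref{gluing BK}.

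Next I would verify the Stiefel--Whitney hypothesis $w_{b^+(N)}(H^+(E_N))\neq 0$. This is exactly \eqref{eq: Stiefel-Whitney}: since $f_j$ acts on the $j$-th copy of $H^+(S^2\times S^2)$ by $-1$, the associated $\R^k$-bundle $H^+((kS^2\times S^2)_{f_1,\dots,f_k}) \to T^k$ is the direct sum of $k$ real line bundles, the $j$-th being the nontrivial line bundle pulled back from the $j$-th $S^1$-factor of $T^k$; its total Stiefel--Whitney class is $\prod_{j=1}^k(1+x_j)$ where $x_j\in H^1(T^k;\Z/2)$ are the standard generators, so $w_k = x_1\cdots x_k \neq 0$. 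With the spin$^c$ structures chosen as $\frakt\in\Spinc(M_i,0)$ and $\frakt'=\fraks_S\in\Spinc(kS^2\times S^2,k+1)$ (the unique spin structure, which has $c_1 = 0$ and hence $d(\fraks_S)$ computed from $\chi$ and $\sigma$ to give $d(\frakt\#\fraks_S)=-k$ as asserted in the gluing setup), \cref{gluing BK} directly yields $SW(E_i,\frakt\#\fraks_S) = SW(M_i,\frakt)$ in $\Z/2$, which is the claim.

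The main thing requiring care — and what I expect to be the chief obstacle — is making precise the identification of $E_i$ as a fiberwise connected sum in a way that matches the hypotheses of \cref{gluing BK}, in particular producing the sections $\iota_M, \iota_N$ with orientation-reversingly isomorphic normal bundles. This is essentially bookkeeping: the diffeomorphisms $f_1,\dots,f_k$ were arranged in \cref{subsectionFamilies over the torus} to fix a common disk $D^4$ pointwise (coming from the disk fixed by $f$ in $S^2\times S^2$), so the constant section through a point of that disk gives $\iota_N$, and the corresponding disk in $M_i$ gives $\iota_M$ after using the identification $\mathring{M}_i\#S^2\times S^2 \cong \mathring{M}\#S^2\times S^2$ with matching parametrizations; the normal bundles are trivialized compatibly, so the orientation-reversing gluing isomorphism exists. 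The remaining verification that $d(\frakt\#\fraks_S) = -k$ and that $\fraks_S$ genuinely lies in $\Spinc(kS^2\times S^2,k+1)$ is a routine additivity computation for $c_1^2$, $\chi$, and $\sigma$ under connected sum, which I would state but not belabor.
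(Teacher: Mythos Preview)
Your proposal is correct and follows essentially the same approach as the paper's proof, which simply cites \eqref{eq: Stiefel-Whitney} and \cref{gluing BK}. You have supplied the details that the paper leaves implicit: the identification of $E_i$ as the fiberwise connected sum of the trivial $M_i$-bundle with $(kS^2\times S^2)_{f_1,\dots,f_k}$, the verification of the Stiefel--Whitney hypothesis (which is exactly the content of \eqref{eq: Stiefel-Whitney}), and the bookkeeping for the sections $\iota_M,\iota_N$.
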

\begin{proof}
This follows from \eqref{eq: Stiefel-Whitney} and \cref{gluing BK}.  
\end{proof}

\subsection{Completion of the proof}

As in \cref{sec: Families over a torus}, fix $k>0$, take a $4$-manifold $M$ satisfying  \cref{assumption on M}. 
We shall use $M_i$ and $c_i$ that appear in \cref{assumption on M}, and shall use the notation $E_i$ and $\alpha_i$ for $i\geq1$ introduced in \cref{subsectionFamilies over the torus}.
Set $X = M\#kS^2 \times S^2$ and $X_i = M_i\#kS^2\times S^2$.

For each $i\geq1$, we fix a section
\[
\tau^0_i : \Spinc(M_i)/\Conj \to \Spinc(M_i)
\]
of the quotient map $\Spinc(M_i) \to \Spinc(M_i)/\Conj$.
Using $\tau^0_i$, we define a section 
\[
\tau_i : \Spinc(X_i)/\Conj \to \Spinc(X_i)
\]
as follows: for $\fraks \in \Spinc(X_i)$, we define $\tau([\fraks])$ to be the spin$^c$ structure $\fraks'$ with $[\fraks] = [\fraks']$ in $\Spinc(X_i)/\Conj$ such that $\fraks'|_{M_i} = \tau_0([\fraks|_{M_i}])$.
Restricting this, we obtain a section (denoted by the same notation)
\[
\tau_i : \Spinc(X_i,k)/\Conj \to \Spinc(X_i,k).
\]

As in \cref{gluing section}, let $\fraks_S$ denote the unique spin structure on $kS^2\times S^2$.
For each $i \geq 1$, we define $\calS_i \in \bbS(X_i,k)$ to be the $\mathrm{Diff}^+(X_i)$-orbit that contains $[c_i\#\fraks_S] \in \Spinc(X_i,k)/\Conj$.

\begin{pro}
\label{cor reduction to monodromy inv part}
For $E_i \to T^k$ constructed in \cref{subsectionFamilies over the torus}, we have
\[
\langle
\SWbbhalftot^k(X, \calS_i), (E_i)_\ast([T^k])
\rangle
\neq 0
\]
in $\Z/2$.
\end{pro}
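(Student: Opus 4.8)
The plan is to apply \cref{prop: reducing to monodromy invariant part} to the mutually commuting diffeomorphisms $f_1,\dots,f_k$ of $X_i$ (identified with $X$) that define the multiple mapping torus $E_i=X_{f_1,\dots,f_k}$ in \cref{subsectionFamilies over the torus}, taking $\calS=\calS_i$ and $\tau=\tau_i$, and then to identify the resulting sum of numerical families Seiberg--Witten invariants with $N(M_i;c_i)\bmod 2$, which is $1$ by property (iii) of \cref{assumption on M}.

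First I would check the two hypotheses of \cref{prop: reducing to monodromy invariant part}. Hypothesis (ii) holds because $f$ is obtained by isotoping the involution $f_0$ with $f_0^2=\id$, and this isotopy can be chosen supported in the corresponding $S^2\times S^2$ summand, hence commuting with the other $f_j$. For hypothesis (i), recall that $f_j$ is supported in the $j$-th copy of $(S^2\times S^2)\setminus D^4$, acting on the $j$-th summand of $H^2(S^2\times S^2;\Z)$ by $-1$ and as the identity on the complementary summand of $H^2(X_i;\Z)$; since $\tau_i$ is built from a section $\tau^0_i$ of $\Spinc(M_i)/\Conj$ and $f_j$ restricts to the identity on $M_i\setminus D^4$, one checks directly that $f_j$ preserves the set of representatives $\tau_i(\calS_i)$ setwise. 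Granting this, \cref{prop: reducing to monodromy invariant part} gives
\[
\langle \SWbbhalftot^k(X,\calS_i),(E_i)_\ast([T^k])\rangle=\sum_{\substack{\fraks\in\tau_i(\calS_i)\\ f_j^\ast\fraks=\fraks\ (1\le j\le k)}} SW(E_i,\fraks)\quad\text{in }\Z/2 .
\]

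Next I would pin down the index set. Since $H^2(X_i;\Z)$ is torsion-free and $f_j^\ast$ acts by $-1$ on the $j$-th $H^2(S^2\times S^2;\Z)$, a spin$^c$ structure is fixed by all $f_j^\ast$ iff its first Chern class vanishes on every $S^2\times S^2$ summand, i.e.\ iff it has the form $\frakt\#\fraks_S$ for a unique $\frakt$; the formal-dimension bookkeeping (using $b^+(kS^2\times S^2)=k$) forces $\frakt\in\Spinc(M_i,0)$, and such $\frakt$ automatically satisfy $c_1(\frakt)^2=2\chi(M_i)+3\sigma(M_i)=c_i^2$. Such a spin$^c$ structure lies in $\tau_i(\calS_i)$ exactly when $\frakt=\tau^0_i([\frakt])$ and $[\frakt\#\fraks_S]\in\calS_i$. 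Applying \cref{lem: conseq gluing}, $SW(E_i,\frakt\#\fraks_S)=SW(M_i,\frakt)$, so the right-hand side above equals $\sum SW(M_i,\frakt)\bmod 2$, the sum over $[\frakt]\in\Spinc(M_i,0)/\Conj$ with $[\frakt\#\fraks_S]\in\calS_i$. To evaluate the latter condition I would invoke Wall's stable realization theorem (\cref{lem: Wall}): since $k\ge1$, $X_i=M_i\#kS^2\times S^2$ contains an $S^2\times S^2$ summand and $M_i$ is indefinite, so $\Diff^+(X_i)$ acts transitively on the characteristic elements of $H^2(X_i;\Z)$ of a prescribed square and divisibility. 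As all of $\Spinc(X_i,k)$ have $c_1$-square equal to $c_i^2$, the orbit $\calS_i$ is precisely $\{[c]: c\text{ characteristic},\ c^2=c_i^2,\ \divi(c)=\divi(c_i)\}$; identifying $c_1(\frakt\#\fraks_S)$ with $c_1(\frakt)\in H^2(M_i;\Z)$ (whose divisibility in $H^2(X_i;\Z)$ equals that in $H^2(M_i;\Z)$), the membership condition reduces to $\divi(c_1(\frakt))=\divi(c_i)$. Hence the sum equals, mod $2$, the number of $[\frakt]\in\Spinc(M_i,0)/\Conj$ with $\divi(c_1(\frakt))=\divi(c_i)$ and $SW(M_i,\frakt)\neq0$, which is exactly $N(M_i;c_i)\bmod 2=1$.

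The hard part will be the third step: correctly matching the abstract $\Diff^+(X_i)$-orbit $\calS_i$ with a subset cut out purely by square and divisibility via \cref{lem: Wall}, and carefully tracking how divisibility behaves under the inclusion $H^2(M_i;\Z)\hookrightarrow H^2(X_i;\Z)$ and under the fixed identification $X_i\cong X$. The verification of hypothesis (i) of \cref{prop: reducing to monodromy invariant part} — namely that each $f_j$ preserves the chosen set of representatives $\tau_i(\calS_i)$, not merely the orbit $\calS_i$ — will also require some care with the definition of $\tau_i$ in terms of $\tau^0_i$.
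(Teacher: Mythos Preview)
Your proposal is correct and follows essentially the same route as the paper's proof: reduce via \cref{prop: reducing to monodromy invariant part} to a sum over monodromy-invariant spin$^c$ structures, identify these as $\frakt\#\fraks_S$ with $\frakt\in\Spinc(M_i,0)$, apply the gluing formula \cref{lem: conseq gluing}, and use Wall's theorem (\cref{lem: Wall}) to match the orbit condition with the divisibility/square condition defining $N(M_i;c_i)$. The only cosmetic differences are that the paper inserts an explicit naturality step $\langle\SWbbhalftot^k(X,\calS_i),(E_i)_\ast[T^k]\rangle=\langle\SWbbhalftot^k(E_i,\calS_i),[T^k]\rangle$ before invoking \cref{prop: reducing to monodromy invariant part}, and retains the (automatic) square condition $c_1(\frakt)^2=c_i^2$ rather than observing, as you do, that it is forced by the formal-dimension constraint.
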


\begin{proof}
First, the naturality of the characteristic class implies that
\begin{align}
\label{eq: natural}
\langle
\SWbbhalftot^k(X, \calS_i), (E_i)_\ast([T^k])
\rangle
=
\langle
\SWbbhalftot^k(E_i, \calS_i), [T^k]
\rangle.
\end{align}
To compute the right-hand side of \eqref{eq: natural}, we shall apply \cref{prop: reducing to monodromy invariant part} to the families $E_i$.  
Recall that $E_i$ was constructed by using a diffeomorphism $f \in \mathrm{Diff}_\del(S^2 \times S^2 \setminus \mathrm{Int}(D^4))$.
This diffemorphism $f$ is order 2 in $\pi_0(\mathrm{Diff}_\del(S^2 \times S^2 \setminus \mathrm{Int}(D^4)))$.
Thus, for the diffeomorphisms $f_1, \dots, f_k$ on $X_i$, of which the multiples mapping torus is $E_i$, we can find isotopies $(F_i^t)_{t \in [0,1]}$ that satisfies the assumption (ii) of \cref{prop: reducing to monodromy invariant part}.
Since $f_j$ act trivially on $M_i$, by the construction of $\tau_i$, it follows that $\tau_i(\calS_i)$ is setwise preserved under the actions of $f_j$.
Thus we may apply \cref{prop: reducing to monodromy invariant part} to the families $E_i$ and obtain the equality
\begin{align}
\label{eq: first eq}
\langle
\SWbbhalftot^k(E_i, \calS_i), [T^k]
\rangle
= \sum_{\substack{\fraks \in \tau_i(\calS_i), \\ f_j^\ast \fraks = \fraks\ (1 \leq j \leq k)}}
SW(E_i,\fraks)
\end{align}
in $\Z/2$.

We shall compute the right-hand side of \eqref{eq: first eq}.
Since $f_j$ acts on the $j$-th copy of $H^2(S^2\times S^2)$ via multiplication by $-1$, a spin$^c$ structure $\fraks \in \Spinc(X_i)$ is $f_j$-invariant for all $j$ if and only if $\fraks$ is of the form $\frakt \# \fraks_S$, where $\frakt \in \Spinc(M_i)$.
It is easy to see that, if $d(\frakt \# \fraks_S)=-k$, then $d(\frakt)=0$.
Thus we get from \cref{lem: conseq gluing} that 
\begin{align}
\label{eq: second eq}
\sum_{\substack{\fraks \in \tau_i(\calS_i), \\ f_j^\ast \fraks = \fraks\ (1 \leq j \leq k)}}
SW(E_i,\fraks)
= \sum_{\substack{\frakt\#\fraks_S \in \tau_i(\calS_i), \\ \frakt \in \Spinc(M_i,0)}}
SW(M_i,\frakt)
\end{align}
in $\Z/2$.

To compute the right-hand side of \eqref{eq: second eq},
let $\frakt \in \Spinc(M_i,0)$ be a spin$^c$ structure on $M_i$.
We claim that  $\frakt\#\fraks_S$ lies in $ \tau_i(\calS_i)$ if and only if all of the following three conditions (i)-(iii) are satisfied: (i) $\divi(c_1(\frakt)) = \divi(c_i)$, (ii) $c_1(\frakt)^2 = c_i^2$, and (iii) $\frakt \in \tau_i^0(\Spinc(M_i,0)/\Conj)$.
Noting $c_1(\frakt) = c_1(\frakt \# \fraks_S)$ in $H^2(X_i;\Z)$, this claim is a direct consequence of \cref{lem: Wall}, which we shall see later.

By the claim of the last paragraph, we have 
\begin{align}
\label{eq: third eq}
\sum_{\substack{\frakt\#\fraks_S \in \tau_i(\calS_i), \\ \frakt \in \Spinc(M_i,0)}}
SW(M_i,\frakt)
= N(M_i; c_i)
\end{align}
in $\Z/2$.
Here the right-hand side of \eqref{eq: third eq} was assumed to be non-zero in $\Z/2$ in \cref{assumption on M}.
Thus the assertion of the \lcnamecref{cor reduction to monodromy inv part} follows from \eqref{eq: natural}, \eqref{eq: first eq}, \eqref{eq: second eq}, \eqref{eq: third eq}.
\end{proof}

Here we record a \lcnamecref{lem: Wall} that we have used above:

\begin{pro}[Wall~\cite{Wall62unimodular,Wa64}]
\label{lem: Wall}
Let $Z$ be a smooth closed oriented simply-connected $4$-manifold.
Suppose that $b_2(Z)-\sigma(Z) \geq 2$, and that $Z$ is either indefinite or $b_2(Z) \leq 8$.
Set $Z' = Z\#S^2\times S^2$.
Then, given $x, y \in H^2(Z';\Z)$, there exists $f \in \mathrm{Diff}^+(Z')$ with $f^\ast x=y$ if and only if $x, y$ have the same divisibility, self-intersection, and type (i.e. characteristic or not).
\end{pro}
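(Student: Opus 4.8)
I would begin with the easy direction: an orientation-preserving self-diffeomorphism $f$ of $Z'$ induces a ring automorphism $f^\ast$ of $H^2(Z';\Z)$ preserving the cup-product pairing, hence an isometry of the intersection form $Q$ of $Z'$. Any such isometry is a bijection of the lattice, so it preserves divisibility; it preserves self-intersections by definition; and it preserves the characteristic/ordinary dichotomy, since ``$x$ is characteristic'' is the condition $x\cdot z\equiv z\cdot z\pmod 2$ for all $z\in H^2(Z';\Z)$, which is manifestly isometry-invariant. This gives the ``only if'' direction.

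For ``if'' the plan is to split the argument into an algebraic step and a topological realization step, invoking Wall's work in each. \textbf{Algebraic step.} First I would observe that the intersection form of $Z'$ is $Q = Q_Z\oplus H$, where $H$ is the hyperbolic form carried by $S^2\times S^2$, and that $Q$ is indefinite under the hypotheses: this is clear when $Z$ is indefinite; and when $Z$ is definite, $b_2(Z)-\sigma(Z)\geq 2$ forces $b^-(Z)\geq 1$, so $Z$ is negative definite, and with $b_2(Z)\leq 8$ one gets $Q_Z\cong b_2(Z)\langle-1\rangle$ (by Donaldson's diagonalization theorem), so $Q$ is again indefinite. For an indefinite unimodular lattice, Wall's analysis of the isometry group \cite{Wall62unimodular} shows that $\Aut(Q)$ acts transitively on the vectors of a prescribed divisibility, self-intersection, and type; applied to $x$ and $y$, this produces an isometry $\phi\in\Aut(Q)$ with $\phi(x)=y$.

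\textbf{Realization step.} It then remains to realize $\phi$ geometrically, i.e.\ to find $f\in\Diff^+(Z')$ with $f^\ast x = y$. This is exactly Wall's theorem on diffeomorphisms of $4$-manifolds \cite{Wa64}: since $Z'=Z\# S^2\times S^2$ with $Z$ as above, every isometry of the intersection form of $Z'$ is induced by an orientation-preserving self-diffeomorphism, so one may take $f$ with $f^\ast=\phi$, whence $f^\ast x = \phi(x) = y$. The only bookkeeping is to check that the hypotheses on $Z$ are precisely those that place $Q$ in the scope of \cite{Wall62unimodular} (indefiniteness, hence sufficiently many hyperbolic summands) and $Z'$ in the scope of \cite{Wa64} (a split-off $S^2\times S^2$, together with indefiniteness of the complement). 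The genuinely hard input, which I would cite rather than reprove, is Wall's realization theorem: it is established by constructing explicit diffeomorphisms that realize the elementary generators of $\Aut(Q)$, and the embedded $2$-spheres of the $S^2\times S^2$ summand are used in an essential way to slide homology classes past one another.
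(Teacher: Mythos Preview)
Your proof is correct and follows essentially the same two-step strategy as the paper: cite Wall's transitivity result for $\Aut(Q)$ on vectors of fixed divisibility, square, and type, then cite Wall's realization theorem to promote the lattice isometry to a diffeomorphism of $Z'=Z\#S^2\times S^2$.

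Two minor remarks. First, the appeal to Donaldson's diagonalization theorem is unnecessary: once you add the hyperbolic summand $H$ from $S^2\times S^2$, the form $Q_{Z'}=Q_Z\oplus H$ is automatically indefinite regardless of the nature of $Q_Z$. Second, the precise hypothesis the paper invokes for Wall's transitivity is $\mathrm{rank}(Q)-\sigma(Q)\geq 4$ rather than mere indefiniteness; this is exactly what the assumption $b_2(Z)-\sigma(Z)\geq 2$ guarantees for $Q_{Z'}$ (since $\mathrm{rank}(Q_{Z'})-\sigma(Q_{Z'})=b_2(Z)+2-\sigma(Z)\geq 4$), so your argument goes through, but ``indefinite'' alone is a slightly weaker hypothesis than what Wall actually requires. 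The remaining condition in the statement (``$Z$ indefinite or $b_2(Z)\leq 8$'') is what is used for Wall's realization theorem, not ``indefiniteness of the complement'' as you wrote.
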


\begin{proof}
For a unimodular lattice $Q$ with $\mathrm{rank}(Q)-\sigma(Q) \geq 4$, 
Wall \cite[page 337]{Wall62unimodular} proved that $\Aut(Q)$ acts transitively on elements of given divisibility, self-intersection, and type.
On the other hand, each of divisibility, self-intersection, and type is invariant under the action of $\Aut(Q)$.
Thus orbits in $Q/\Aut(Q)$ one-to-one correspond to triples consisting of divisibility, self-intersection, and type.
The assertion of the \lcnamecref{lem: Wall} follows from this applied to the intersection form of $Z'$, together with another theorem by Wall~\cite[Theorem~2]{Wa64} on the realizability of an automorphism of the intersection form by a diffeomorphism.
\end{proof}

Now we can complete the proof of the most general result in this paper:

\begin{proof}[Proof of \cref{thm: generating homoogy}]
As in the construction of $E_i$, we fix diffeomorphisms 
$\psi_i : \mathring{M_i}\#kS^2\times S^2 \to \mathring{X}$ and its extensions $\psi_i : M_i\#kS^2\times S^2 \to X$.
Considering the pull-back of the orbits $\calS_i \in \bbS(X_i,k)$ under $\psi_i$, we obtain orbits (denoted by the same notation) $\calS_i \in \bbS(X,k)$.

Passing to a subsequence if necessary, we may suppose that all $\divi(c_i)$ are distinct by (iii) of \cref{assumption on M}.
Thus we may suppose that all $\calS_i$ are distinct elements in $\bbS(X,k)$.
From this together with \cref{cor finiteness basic}, by passing to a subsequence again, we may suppose that
\begin{align}
\label{eq: disjoint basic classes}
\calS_i \notin \Bhalf(E_1,k) \cup \cdots \cup \Bhalf(E_{i-1},k)
\end{align}
for all $i \geq 2$.

Now it follows from \cref{cor reduction to monodromy inv part} together with \eqref{eq: alpha}, \eqref{eq: disjoint basic classes} that the homomorphism
\[
\bigoplus_{i\geq2}
\langle \SWbbhalftot^k(X,\calS_i), - \rangle :
H_k(B\mathrm{Diff}^+(X);\Z) \to \bigoplus_{i \geq 2} \Z/2
\]
restricts to a surjection
\[
\left<\mathring{\alpha}_i \mid i\geq2\right> \twoheadrightarrow  \bigoplus_{i \geq 2} \Z/2.
\]
%This implies that $\left<\mathring{\alpha}_i \mid i\geq2\right>$ is an infinitely generated subgroup of $H_k(B\mathrm{Diff}^+(X);\Z)$.
This combined with \cref{lem: 2-torsion} implies that the subgroup $\left<\mathring{\alpha}_i \mid i\geq2\right>$ is a $(\Z/2)^{\infty}$-summand of $H_k(B\mathrm{Diff}^+(X);\Z)$, which together with \cref{lem: topologically trivial} completes the proof of (i) of \cref{thm: generating homoogy}.

Since $\rho_\ast(\mathring{\alpha}_i)=\alpha_i$, we obtain (ii) of \cref{thm: generating homoogy} from (i) of \cref{thm: generating homoogy} together with \cref{lem: stabilization}.
\end{proof}

\section{Addenda}

\subsection{Finiteness of mapping class groups in dimension $\neq $ 4}
\label{subsection Finiteness dim not 4}

In dimension $\neq$ 4, not only finite generation, but stronger finiteness on mapping class groups is known.

\subsubsection{dimension $\geq 6$}

Given a simply-connected closed smooth manifold $X$ of $\dim X\geq6$,
Sullivan~\cite[Theorem~(13.3)]{Sull77} proved that $\pi_0(\mathrm{Diff}(X))$ is ``commensurable" with an arithmetic group.
Krannich and Randal-Williams \cite{KrannichRW20} clarified that the term ``commensurable" is used in \cite{Sull77} in a different way from the current common usage.
In summary, given a group, we have implications:
\begin{align*}
&\text{(commensurable with an arithmetic group in the  current common sense)}\\
\Rightarrow& \text{(commensurable with an arithmetic group in the sense of \cite{Sull77})}\\
\Rightarrow & \text{(finitely presented)}
\Rightarrow  \text{(finitely generated)}.
\end{align*}
In particular, \cref{thm: main pi0} implies that mapping class groups of simply-connected $4$-manifolds need not be commensurable with arithmetic groups, even in Sullivan's sense.

\subsubsection{dimension $5$}

While the above result by Sullivan~\cite[Theorem~(13.3)]{Sull77} was stated in $\dim \geq 6$, actually his result holds also in dimension 5.
We record a way to deduce this from a recent paper \cite{bustamante2023finiteness}.
(The author thanks Sander Kupers for informing the author of this argument.)
In the proof of \cite[Theorem~2.6]{bustamante2023finiteness}, the assumption that $\dim\geq6$ was used only in the point (i) in the proof, but it follows from Cerf's theorem \cite{Cerf70} that $\pi_0(C^{\mathrm{Diff}}(M)) = 0$ for a simply-connected 5-manifold $M$, and the assumption that $\dim\geq6$ was not used in \cite[Proposition~2.7]{bustamante2023finiteness}, except for the part where \cite[Proposition~2.6]{bustamante2023finiteness} was used.

\subsubsection{dimension $\leq 3$}
The mapping class groups of closed orientable manifolds of dim $\leq 3$ are finitely presented. See Dehn~\cite{HatcherThurston80} for dimension 2. 
In dimension 3, a more general finiteness holds for the moduli space of 3-manifolds.
See Boyd--Bregman--Steinebrunner~\cite[Theorem 6.12]{boyd2024modulispaces3manifoldsboundary}.

\subsection{Questions: finiteness in other categories}

We close this paper by posting questions on categories other than the smooth category.

As noted in \cref{rem: top MCG}, for a simply-connected closed topological $4$-manifold $X$, the topological mapping class group $\pi_0(\Homeo(X))$ is known to be finitely generated, and so is $H_1(\BHomeo(X);\Z)$.

On the other hand, to the best of the author's knowledge, there is no known finiteness result on $H_k(\BHomeo(X))$ for $k>1$ for general simply-connected $4$-manifolds $X$.
However, it may be natural to hope such finiteness results in the $4$-dimensional topological category, as opposed to the smooth category:

\begin{ques}
Let $X$ be a simply-connected closed oriented topological $4$-manifold.
Is $H_k(\BHomeo(X);\Z)$ finitely generated for each $k$?   
\end{ques}

Recently, Lin and Xie \cite{lin2023configuration} extensively studied the {\it moduli space $\calM^{fs}(X)$ of formally smooth $4$-manifolds}, which is a middle moduli space between the smooth moduli space $\calM^{s}(X)=\BDiff(X)$ and the topological moduli space $\calM^{t}(X)=\BHomeo(X)$.
Lin and Xie pointed out that most exotic phenomena detected by gauge theory are relevant to the discrepancy between $\calM^{s}(X)$ and $\calM^{fs}(X)$.
Since infiniteness of $\calM^{s}(X)$ detected in this paper comes from gauge theory, it may be natural to expect finiteness of $\calM^{fs}(X)$:

\begin{ques}
Let $X$ be a simply-connected closed oriented topological $4$-manifold that admits a formally smooth structure.
Is $H_k(\calM^{fs}(X);\Z)$ finitely generated for each $k$?   
\end{ques}

\bibliographystyle{plain}
\bibliography{mainref}
\end{document}